\DeclareMathAlphabet{\mathpzc}{OT1}{pzc}{m}{it}
\definecolor{Mycolor2}{HTML}{aa2649}
\definecolor{Mycolor1}{HTML}{7b2cbf}
\definecolor{Mycolor3}{HTML}{ff6d00}
\newcommand\moro[1]{{\textcolor{blue}{#1}}} 
\newcommand\boro[1]{{\textcolor{black}{#1}}}
\newcommand{\hyp}[5]{\,\mbox{}_{#1}F_{#2}\!\left(
 \genfrac{}{}{0pt}{}{#3}{#4};#5\right)}
\newcommand{\qhyp}[5]{\,\mbox{}_{#1}\phi_{#2}\!\left(
\genfrac{}{}{0pt}{}{#3}{#4};#5\right)}
\newcommand{\Whyp}[5]{\,\mbox{}_{#1}W_{#2}\!\left({#3};{#4};{#5}\right)}
\def\cprime{$'$}
\newtheorem{thm}{Theorem}[section]
\newtheorem{cor}[thm]{Corollary}
\newtheorem{rem}[thm]{Remark}
\newtheorem{defn}[thm]{Definition}
\def\eqnarray{\stepcounter{equation}\let\@currentlabel=\theequation
\global\@eqnswtrue
\tabskip\@centering\let\\=\@eqncr
$$\halign to \displaywidth\bgroup\hfil\global\@eqcnt\z@
 $\displaystyle\tabskip\z@{##}$&\global\@eqcnt\@ne
 \hfil$\displaystyle{{}##{}}$\hfil
 &\global\@eqcnt\tw@ $\displaystyle{##}$\hfil
 \tabskip\@centering&\llap{##}\tabskip\z@\cr}
\def\endeqnarray{\@@eqncr\egroup
 \global\advance\c@equation\m@ne$$\global\@ignoretrue}
\def\@yeqncr{\@ifnextchar [{\@xeqncr}{\@xeqncr[5pt]}}
\newcommand{\expe}{{\mathrm e}}
\newcommand{\SSS}{{\mathcal S}}
\newcommand{\CC}{{\mathbb C}}
\newcommand{\CCast}{{{\mathbb C}^\ast}}
\newcommand{\CCdag}{{{\mathbb C}^\dag}}
\newcommand{\N}{{\mathbb N}}
\newcommand{\C}{\mathbb{C}} 
\newcommand{\topt}[2]{\left\{\substack{{#1}\\{#2}}\right\}}
\newcommand{\tops}[3]{\left\{\substack{{#1}\\{#2}\\{#3}}\right\}}
\newcommand{\topss}[3]{\left\{\substack{{#1}\\[-0.12cm]{#2}\\{#3}}\right\}}
\begin{document}
\renewcommand{\PaperNumber}{***}
\FirstPageHeading

\ArticleName{Special values for
continuous $q$-Jacobi polynomials}

\ShortArticleName{Special values for
the continuous $q$-Jacobi polynomials
}

\Author{%
Howard S.~Cohl\,$^\dag\!\!\ $ and
Roberto S.~Costas-Santos\,$^\S\!\!\ $ 
}
\AuthorNameForHeading{
H.~S.~Cohl and R. S.~Costas-Santos 
}



\Address{$^\dag$~Applied and Computational Mathematics Division,
National Institute of Standards and Technology,
Gaithersburg, MD 20899-8910, USA
\URLaddressD{
\href{http://www.nist.gov/itl/math/msg/howard-s-cohl.cfm}
{http://www.nist.gov/itl/math/msg/howard-s-cohl.cfm}
}
} 
\EmailD{howard.cohl@nist.gov} 
\Address{$^\S$ Departamento de M\'etodos 
Cuantitativos, Universidad Loyola Andaluc\'ia, 
E-41704, Dos Hermanas, Seville, Spain
} 
\URLaddressD{
\href{http://www.rscosan.com}
{http://www.rscosan.com}
}
\EmailD{rscosa@gmail.com} 
\ArticleDates{Received ???, in final form ????; Published online ????}
\Abstract{%
We study special values for the continuous $q$-Jacobi polynomials and present applications of these special values which arise from bilinear generating functions, and in particular the Poisson kernel for these polynomials.}


\section{Preliminaries}

We adopt the following set 
notations: $\mathbb N_0:=\{0\}\cup\N=\{0, 1, 2, ...\}$, and we 
use the sets $\mathbb Z$, $\mathbb R$, $\mathbb C$ which represent 
the integers, real numbers and 
complex numbers respectively, $\CCast:=\CC\setminus\{0\}$, and 
$\CCdag:=\CCast\setminus \{z\in\CC: |z|=1\}$.
We also adopt the following {\it multiset} notation and conventions.
Let ${\bf a}:=\{a,b,c,d\}$, $a,b,c,d\in\mathbb \CCast$.
Throughout the paper, we assume that the empty sum 
vanishes and the 
empty product 
is unity.
\begin{defn} \label{def:2.1}
We adopt the following conventions for succinctly 
writing elements of lists. To indicate sequential positive and negative 
elements, we write
\[
\pm a:=\{a,-a\}.
\]
\noindent We also adopt the analogous notations
\[
\expe^{\pm i\theta}:=\{\expe^{i\theta},\expe^{-i\theta}\},\quad
z^{\pm}:=\{z,z^{-1}\}.
\]
Within a list of items, we define
\[
a+\topss{x_1}{\vdots}{x_n}:=\{a+x_1,\ldots,a+x_n\}.
\]
\noindent In the same vein, consider the numbers $f_s\in\mathbb C$ 
with $s\in{\mathcal S}\subset \N$,
with ${\mathcal S}$ finite.
Then, the notation
$\{f_s\}$
represents the multiset of all complex numbers $f_s$ such that 
$s\in\SSS$.
Furthermore, consider some $p\in\SSS$, then the notation
$\{f_s\}_{s\ne p}$ represents the sequence of all complex numbers
$f_s$ such that $s\in\SSS\!\setminus\!\{p\}$.
In addition, for the empty list, $n=0$, we take
\[
\{a_1,{...},a_n\}:=\emptyset.
\]
\end{defn}
\noindent In this paper we will be using \moro{standard notations for both finite and $q$-shifted factorials:~including multi-$q$-shifted factorial notation where a comma delineated list represents products (see \cite[Appendix I]{GaspRah}). We also adopt standard notations for} both terminating and nonterminating basic hypergeometric functions ${}_r\phi_s$ \cite[(17.4.1)]{NIST:DLMF}, and a nice specialization which is often referred to as very-well-poised basic hypergeometric functions ${}_{r+1}W_r$ \cite[(2.1.11)]{GaspRah}. For those not familiar with these functions, we urge the reader to refer to some important standard references on the subject, and in particular \cite[Chapter 17]{NIST:DLMF} and links therein and the entire  book by Gasper \& Rahman (2004) \cite{GaspRah}. For particulars relating to orthogonal polynomials we suggest the reader to refer to \cite[Chapters 1,9,14]{Koekoeketal}, the monograph \cite{Ismail:2009:CQO} and the Memoirs of AMS article by Askey \& Wilson \cite{AskeyWilson85}. 
For a general treatment of special functions, one should refer to \cite{AAR} and \cite{NIST:DLMF}.

\subsection{The Askey--Wilson polynomials}

The Askey--Wilson polynomials can be defined in 
terms of the terminating balanced basic hypergeometric series
\cite[(14.1.1)]{Koekoeketal}
\begin{equation}
p_n(x;{\bf a}|q):=a^{-n}(ab,ac,ad;q)_n\qhyp43{q^{-n},q^{n-1}abcd,az^\pm
}{ab,ac,ad}{q,q},
\label{AW}
\end{equation}
where $x=\tfrac12(z+z^{-1})$.
In some of the derivations given below,
we use the renormalized version of the 
Askey--Wilson polynomials given by 
\begin{equation}
r_n(x;{\bf a}|q):=\qhyp43{q^{-n},q^{n-1}abcd,az^\pm
}
{ab,ac,ad}{q,q}=\frac{a^n}{(ab,ac,ad;q)_n}\, 
p_n(x;{\bf a}|q).
\label{RAW}
\end{equation}
The Askey--Wilson polynomials have the following
special values
\cite[(114)]{KoornwinderKLSadd}
\begin{equation}
\label{specAW}
p_n(\tfrac12(a+a^{-1});a,b,c,d|q)=a^{-n}(ab,ac,ad;q)_n
\end{equation}
(and similarly for arguments $\tfrac12(b+b^{-1})$,
$\tfrac12(c+c^{-1})$,
$\tfrac12(d+d^{-1})$).

\subsection{The $q$-Racah polynomials}

Let $m\in\C$, $n,N\in\N_0$ 
such that $n\in\{0,\ldots,N\}$.
Let us consider the $q$-Racah polynomials $R_n(\mu(m);\bar{\alpha},\bar{\beta},\bar{\gamma},\bar{\delta}|q)$
which are discrete cases of the 
Askey--Wilson polynomials \eqref{AW}
(note that we have used the notation $\{\bar{\alpha},\bar{\beta},\bar{\gamma},\bar{\delta}\}$ instead of the standard
$\{\alpha,\beta,\gamma,\delta\}$ in order to
disambiguate between the 
$\{\alpha,\beta\}$ parameters 
which appear in the study of continuous $q$-Jacobi polynomials (see \S\ref{ctsqJac} below)).
The $q$-Racah polynomials are
defined by 
\cite[(14.2.1)]{Koekoeketal}
\begin{equation}
R_n(\mu(m);\bar{\alpha},\bar{\beta},\bar{\gamma},\bar{\delta}|q):=\qhyp43{q^{-n},q^{n+1}\bar{\alpha}\bar{\beta},q^{-m},q^{m+1}\bar{\gamma}\bar{\delta}}{q\bar{\alpha},q\bar{\beta}\bar{\delta},q\bar{\gamma}}{q,q},
\label{qRdefn}
\end{equation}
where
$\mu(m):=\mu(m;{\bar{\gamma}},{\bar{\delta}}|q):=q^{-m}+q^{m+1}\bar{\gamma}\bar{\delta}$,
and 
\begin{equation}
{\bar{\alpha}}=q^{-N-1}\quad \mbox{or}\quad {\bar{\beta}}{\bar{\delta}}=q^{-N-1}\quad \mbox{or}\quad 
{\bar{\gamma}}=q^{-N-1}.
\label{qRcond}
\end{equation}
The $q$-Racah polynomials are 
orthogonal on the finite $q$-quadratic
set 
$\{\mu(m;{\bar{\gamma}},{\bar{\delta}}|q)\}$.
See \cite[(14.2.2)]{Koekoeketal} for the 
orthogonality relation for $q$-Racah polynomials.
Observe that for $m\in\N_0$, the $q$-Racah polynomials
satisfy the 
following duality relation \cite[(146)]{KoornwinderKLSadd}
\begin{equation}\label{eq:duality}
R_n(\mu(m);{\bar{\alpha}},{\bar{\beta}},{\bar{\gamma}},{\bar{\delta}}|q)=
R_m(\mu(n);{\bar{\gamma}},{\bar{\delta}},{\bar{\alpha}},{\bar{\beta}}|q). 
\end{equation}

\subsection{The continuous {\it q}-Jacobi polynomials}
\label{ctsqJac}
The continuous $q$-Jacobi polynomials
\cite[Section 14.10]{Koekoeketal} 
follow from the Askey--Wilson polynomials 
as follows 
\cite[(14.1.19) and p.~467]{Koekoeketal}
\begin{equation}
P_{n}^{(\alpha,\beta)} (x|q):=
\frac{q^{(\frac\alpha2+\frac14)n}}
{(q,-q^{\frac{\alpha+\beta}{2}+\frac12\topt{1}{2}}
;q)_n} p_n (x;{\bf a}|q),
\label{inter:AWqJac}
\end{equation}
where
\begin{equation}
{\bf a}:=\{a,b,c,d\}:=
\left\{q^{\frac12\alpha+\frac14\topt{1}{3}},
-q^{\frac12\beta+\frac14\topt{1}{3}}\right\}.
\label{abcd}
\end{equation}
For the coefficients $\{a,b\}=\{\pm q^\frac12\}$, 
$c={q^{\alpha+\frac12}}$, $d=-{q^{\beta+\frac12}}$, the 
renormalized Askey--Wilson polynomials are related to the continuous 
$q$-Jacobi polynomials with base $q^2$, namely
\begin{equation}
r_n(x;\pm q^\frac12,{q^{\alpha+\frac12}},-{q^{\beta+\frac12}}|q)=
q^{-n\alpha}
\frac{(q,-q^{\alpha+\beta+1};q)_n}{
(q^{\alpha+1},-q^{\beta+1};q)_n}
P_n^{(\alpha,\beta)}(x|{q^2}).
\end{equation}
In fact, this Askey--Wilson polynomial is related
by a quadratic transformation which follows from a
formula due to Singh
\cite[(4.22)]{AskeyWilson85},
\cite[(III.21)]{GaspRah},
\cite[(7)]{Singh59} 
\begin{equation}
\qhyp43{q^{-2n},q^{2n}a^2,c^2,qb^2}{-a,-qa,q^2b^2c^2}{q^2,q^2}
=(bc)^n\frac{(-q,-\frac{a}{bc};q)_n}
{(-a,-qbc;q)_n}
\qhyp43{q^{-n},q^n{a},\frac{c}{b},\frac{qb}{c}}{-q,-\frac{a}{bc},qbc}{q,q},
\label{Singhquad}
\end{equation}
or equivalently,
\begin{equation}
\qhyp43{q^{-n},q^{n}a^2,c^2,q^\frac12b^2}{-a,-q^\frac12a,qb^2c^2}{q,q}
=(bc)^n\frac{(-q^\frac12,-\frac{a}{bc};q^\frac12)_n}
{(-a,-q^\frac12bc;q^\frac12)_n}
\qhyp43{q^{-\frac{n}{2}},q^{\frac{n}{2}}{a},\frac{c}{b},\frac{q^\frac12b}
{c}}{-q^\frac12,-\frac{a}{bc},q^\frac12bc}{q^\frac12,q^\frac12},
\label{Singhquad2}
\end{equation}
but was also
proved independently by Askey and Wilson in \cite[(3.2)]{AskeyWilson85}),
\cite[(1.12)]{Rahman86prodctsqJ}
\begin{equation}
\hspace{-0.5cm}r_n\left(x;\pm q^{\frac12},{q^{\alpha+\frac12}},
-{q^{\beta+\frac12}}|q\right)
=q^{-n\alpha}
\frac{(-q^{\alpha+1},-q^{\alpha+\beta+1};q)_n}
{(-q^{\beta+1},-q;q)_n}
r_n\left(x;q^{\alpha+\frac12\topt{1}{3}},-q^{\beta+\frac12\topt{1}{3}}|q^2\right).
\label{ctsJacquad}
\end{equation}
The parity relation for continuous $q$-Jacobi polynomials is 
given by \cite[(165)]{KoornwinderKLSadd}
\begin{equation}
\label{ctsqJsymmetry}
P_n^{(\alpha,\beta)}(-x|q)=-q^{\frac12 n(\alpha-\beta)}
P_n^{(\beta,\alpha)}(x|q).
\end{equation}
The continuous $q$-Jacobi polynomials are orthogonal 
over $x=\frac12(\expe^{i\theta}+\expe^{-i\theta})=\cos\theta\in(-1,1)$ 
which is demonstrated by the orthogonality relation
\begin{equation}
\int_{-1}^1 P_m^{(\alpha,\beta)}(x|q)P_n^{(\alpha,\beta)}(x|q)\,
\frac{w(x;\alpha,\beta|q)}{\sqrt{1-x^2}}\,{\mathrm d}x=h_n(\alpha,\beta|q)
\delta_{m,n},
\label{octsqJac}
\end{equation}
where the weight function and the norm for the continuous 
$q$-Jacobi polynomials are given by
\begin{equation}
w(x;\alpha,\beta|q)
=\frac{(\expe^{\pm 2i\theta};q)_\infty}
{(q^{\frac12\alpha+\frac14}\expe^{\pm i\theta},
q^{\frac12\alpha+\frac34}\expe^{\pm i\theta},
-q^{\frac12\beta+\frac14}\expe^{\pm i\theta},
-q^{\frac12\beta+\frac34}\expe^{\pm i\theta} ;q)_\infty},
\label{wctsqJac}
\end{equation}
\begin{equation}
h_n(\alpha,\beta|q)=\frac{2\pi q^{(\alpha+\frac12)n}(q^{\frac{\alpha
+\beta+2}{2}},q^{\frac{\alpha+\beta+3}{2}};q)_\infty 
(q^{\alpha+1},q^{\beta+1},q^{\frac{\alpha+\beta+1}{2}};q)_n
}
{(q,q^{\alpha+1},q^{\beta+1},-q^{\frac{\alpha+\beta+1}{2}},
-q^{\frac{\alpha+\beta+2}{2}}
;q)_\infty(q,q^{\alpha+\beta+1},q^{\frac{\alpha+\beta+3}{2}};q)_n
}.
\label{normctsqJac}
\end{equation}

\begin{thm}
Let $n\in{\mathbb N}_0$, $|q|<1$, $\alpha,\beta\in{\mathbb C}$, 
$x=\frac12(z+z^{-1})\in\CC$. 
Then the exhaustive list of all balanced 
${}_4\phi_3$ continuous $q$-Jacobi polynomial representations 
are given by
\begin{eqnarray}
\label{ctsqJ:def1}&& \hspace{-1.40cm}P_n^{(\alpha,\beta)}(x|q) = 
\frac{(q^{\alpha+1};q)_n}{(q;q)_n}
\qhyp43{q^{-n},q^{\alpha+\beta+n+1}, q^{\frac12\alpha+\frac14}
z^{\pm}
}
{q^{\alpha+1},-q^{\frac{\alpha+\beta}{2}+\frac12\topt{1}{2}}}
{q,q} \\
\label{ctsqJ:def1b} &&\hspace{0.5cm}=
q^{-\frac{n}{2}}\frac{(q^{\alpha+1},-q^{\frac{\alpha+\beta+3}{2}};q)_n}
{(q,-q^{\frac{\alpha+\beta+1}{2}};q)_n}
\qhyp43{q^{-n},q^{\alpha+\beta+n+1}, q^{\frac12\alpha+\frac34}z^\pm }
{q^{\alpha+1},-q^{\frac{\alpha+\beta}{2}+\frac12\topt{2}{3}}}
{q,q} 
\\
\label{ctsqJ:def1c} &&\hspace{0.5cm}=
\left(-q^{\frac{\alpha-\beta}{2}}\right)^n\frac{(q^{\beta+1};q)_n}
{(q;q)_n}
\qhyp43{q^{-n},q^{\alpha+\beta+n+1}, -q^{\frac12\beta+\frac14}z^\pm }
{q^{\beta+1},-q^{\frac{\alpha+\beta}{2}+\frac12\topt{1}{2}}}
{q,q} 
\\
\label{ctsqJ:def1d} &&\hspace{0.5cm}=
\left(-q^{\frac{\alpha-\beta-1}{2}}\right)^n
\frac{(q^{\beta+1},-q^{\frac{\alpha+\beta+3}{2}};q)_n}
{(q,-q^{\frac{\alpha+\beta+1}{2}};q)_n}
\qhyp43{q^{-n},q^{\alpha+\beta+n+1}, -q^{\frac12\beta+\frac34}z^\pm }
{q^{\beta+1},-q^{\frac{\alpha+\beta}{2}+\frac12\topt{2}{3}}}
{q,q};
\end{eqnarray}
\begin{eqnarray}
&&\hspace{-0.60cm}P_n^{(\alpha,\beta)}(x|q)\nonumber \\
&&\hspace{0.0cm}=
q^{-\binom{n}{2}}\left(-q^{-\frac12}\right)^n
\frac{
\left(q^{\frac{\alpha+\beta}{2}+\frac12\topt{1}{2}},q^{\frac12\alpha+\frac34} 
z^\pm ;q\right)_n}{\left(q,q^{\alpha+\beta+1};q\right)_n}
\qhyp43{q^{-n}, q^{-\alpha-n},
-q^{-\frac{\alpha+\beta}{2}-n-\frac12\topt{0}{1}}
}
{q^{-\alpha-\beta-2n}, 
q^{-\frac12\alpha+\frac14-n}z^\pm}{q,q}\label{ctsqJ:def2b}\\
&&\hspace{0.0cm}=
q^{-\binom{n}{2}}(-1)^n
\frac{
\left(q^{\frac{\alpha+\beta}{2}+\frac12\topt{1}{2}},q^{\frac12\alpha+\frac14} 
z^\pm ;q\right)_n}{\left(q,q^{\alpha+\beta+1};q\right)_n}
\qhyp43{q^{-n}, q^{-\alpha-n},
-q^{-\frac{\alpha+\beta}{2}-n+\frac12\topt{0}{1}}
}
{q^{-\alpha-\beta-2n}, 
q^{-\frac12\alpha+\frac34-n}z^\pm}{q,q}\label{ctsqJ:def2} \\
&&\hspace{0.0cm}=
\left(q^{\frac{\alpha-\beta-n}2}\right)^n
\frac{
\left(q^{\frac{\alpha+\beta}{2}+\frac12\topt{1}{2}},-q^{\frac12\beta+\frac34} 
z^\pm ;q\right)_n}{\left(q,q^{\alpha+\beta+1};q\right)_n}
\qhyp43{q^{-n}, q^{-\beta-n},
-q^{-\frac{\alpha+\beta}{2}-n-\frac12\topt{0}{1}}
}
{q^{-\alpha-\beta-2n}, 
-q^{-\frac12\beta+\frac14-n}z^\pm }
{q,q} \label{ctsqJ:def2d} \\
&&\hspace{0.0cm}=
q^{-\binom{n}{2}}\left(q^{\frac{\alpha-\beta}2}\right)^n
\frac{
\left(q^{\frac{\alpha+\beta}{2}+\frac12\topt{1}{2}},-q^{\frac12\beta+\frac14} 
z^\pm ;q\right)_n}{\left(q,q^{\alpha+\beta+1};q\right)_n}
\qhyp43{q^{-n}, q^{-\beta-n},
-q^{-\frac{\alpha+\beta}{2}-n+\frac12\topt{0}{1}}
}
{q^{-\alpha-\beta-2n}, 
-q^{-\frac12\beta+\frac34-n}z^\pm }
{q,q}\!;\label{ctsqJ:def2c}
\end{eqnarray}
and
\begin{eqnarray}
&&\hspace{-0.3cm}
P_n^{(\alpha,\beta)}(x|q)
= z^n q^{(\frac\alpha2+\frac14)n} 
\frac{\left(q^{\alpha+1},-q^{\frac12\beta+\frac14\topt{1}{3}}z^{-1};q\right)_n}
{(q,-q^{\frac{\alpha+\beta}{2}+\frac12\topt{1}{2}} ;q)_n}
\qhyp43{q^{-n},q^{-\beta-n},
q^{\frac12\alpha+\frac14\topt{1}{3}}z}
{q^{\alpha+1},-q^{-\frac12\beta-n+\frac14\topt{1}{3}}z}{q,q}\label{ctsqJ:def3} \\
&&\hspace{0.1cm}= z^n q^{(\frac\alpha2+\frac14)n} 
\frac{\left(q^{\beta+1},
q^{\frac12\alpha+\frac14\topt{1}{3}}z^{-1};q\right)_n}
{(q,-q^{\frac{\alpha+\beta}{2}+\frac12\topt{1}{2}} ;q)_n}
\qhyp43{q^{-n},q^{-\alpha-n},
-q^{\frac12\beta+\frac14\topt{1}{3}}z}
{q^{\beta+1},q^{-\frac12\alpha-n+\frac14\topt{1}{3}
}z
}{q,q} \label{ctsqJ:def3f}\\
 &&\hspace{0.1cm}= z^n q^{(\frac\alpha2 +\frac14)n} 
\frac{\left(q^{\frac\alpha2+\frac34}z^{-1},
-q^{\frac\beta2+\frac34}z^{-1};q\right)_n}
{(q,-q^{\frac{\alpha+\beta+2}{2}} ;q)_n}
\qhyp43{q^{-n},-q^{-\frac{\alpha+\beta+1}{2}-n},
q^{\frac12\alpha+\frac14}z,
-q^{\frac12\beta+\frac14}z}
{-q^{\frac{\alpha+\beta+1}{2}},q^{-\frac12\alpha+\frac14-n}z,
-q^{-\frac12\beta+\frac14-n}z
}{q,q}\label{ctsqJ:def3b}\\
 &&\hspace{0.1cm}=z^n q^{(\frac\alpha2+\frac14)n} 
\frac{\left(q^{\frac\alpha2+\frac34}z^{-1},
-q^{\frac\beta2+\frac14}z^{-1};q\right)_n}
{(q,-q^{\frac{\alpha+\beta+1}{2}} ;q)_n}
\qhyp43{q^{-n},-q^{-\frac{\alpha+\beta}{2}-n},
q^{\frac12\alpha+\frac14}z,
-q^{\frac12\beta+\frac34}z}
{-q^{\frac{\alpha+\beta+2}{2}},
q^{-\frac12\alpha+\frac14-n}z,
-q^{-\frac12\beta+\frac34-n}z
}{q,q} \label{ctsqJ:def3c}\\
 &&\hspace{0.1cm}=z^n q^{(\frac\alpha2+\frac14)n} 
\frac{\left(q^{\frac\alpha2+\frac14}z^{-1},
-q^{\frac\beta2+\frac34}z^{-1};q\right)_n}
{(q,-q^{\frac{\alpha+\beta+1}{2}} ;q)_n}
\qhyp43{q^{-n},-q^{-\frac{\alpha+\beta}{2}-n},
q^{\frac12\alpha+\frac34}z,
-q^{\frac12\beta+\frac14}z}
{-q^{\frac{\alpha+\beta+2}{2}},
q^{-\frac12\alpha+\frac34-n}z,
-q^{-\frac12\beta+\frac14-n}z
}{q,q} \label{ctsqJ:def3d} \\
\nonumber &&\hspace{0.1cm}=z^n q^{(\frac\alpha2+\frac14)n}
\frac{\left(-q^{\frac{\alpha+\beta+3}{2}},
q^{\frac\alpha2+\frac14}z^{-1},
-q^{\frac\beta2+\frac14}z^{-1};q\right)_n}
{(q,-q^{\frac{\alpha+\beta+1}{2}},-q^{\frac{\alpha+\beta+2}{2}} ;q)_n}\\
 &&\hspace{6.45cm}\times
\qhyp43{q^{-n},-q^{-\frac{\alpha+\beta-1}{2}-n},
q^{\frac\alpha2+\frac34}z,
-q^{\frac\beta2+\frac34}z}
{-q^{\frac{\alpha+\beta+3}{2}},
q^{-\frac\alpha2+\frac34-n}z,
-q^{-\frac\beta2+\frac34-n}z}
{q,q},\label{ctsqJ:def3e}
\end{eqnarray}
and as well as the application of the map 
$z\mapsto z^{-1}$ in
\eqref{ctsqJ:def3}--\eqref{ctsqJ:def3e}. 
\end{thm}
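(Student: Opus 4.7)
The plan is to derive each of the three blocks of representations starting from the Askey--Wilson connection \eqref{inter:AWqJac}, and then close with an exhaustivity argument. Throughout, the free input is the symmetric ${}_4\phi_3$ presentation \eqref{AW} of $p_n(x;{\bf a}|q)$ combined with the specific parameter tuple \eqref{abcd}, together with the standard transformation theory (Sears' transformation and series reversal) for terminating balanced ${}_4\phi_3$.

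First I would establish Block 1, namely \eqref{ctsqJ:def1}--\eqref{ctsqJ:def1d}, directly from \eqref{AW}. Since the Askey--Wilson polynomial is symmetric in its four parameters $\{a,b,c,d\}$, the form \eqref{AW} produces four essentially distinct ${}_4\phi_3$ presentations depending on which of the four is placed in the distinguished slot that carries $az^{\pm}$. Plugging each of the four choices from \eqref{abcd} into \eqref{AW}, multiplying by the normalization prefactor in \eqref{inter:AWqJac}, and collecting $q$-shifted factorials produces exactly \eqref{ctsqJ:def1}--\eqref{ctsqJ:def1d}; the $(-q^{(\alpha-\beta)/2})^n$ and $q^{-n/2}$ type prefactors come out of the factor $a^{-n}$ in \eqref{AW} after substitution.

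Next, for Block 2, namely \eqref{ctsqJ:def2b}--\eqref{ctsqJ:def2c}, I would apply Sears' transformation \cite[(III.15)]{GaspRah} for terminating balanced ${}_4\phi_3$ series to each representation in Block 1. The balancing condition $q^{\alpha+\beta+n+1}\cdot q^{\alpha/2+1/4}z\cdot q^{\alpha/2+1/4}z^{-1}\cdot q\cdot q^{1-n}$ vs.\ the product of lower parameters pins down the target denominators, producing the common lower-parameter slot $q^{-\alpha-\beta-2n}$ that distinguishes Block 2. The four variants in \eqref{ctsqJ:def2b}--\eqref{ctsqJ:def2c} arise because Sears' prefactor depends on which numerator parameter is taken as the pivot; this accounts for the mild differences in $q$-powers and $(-1)^n$ signs. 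For Block 3, namely \eqref{ctsqJ:def3}--\eqref{ctsqJ:def3e}, I would use the series reversal involution $k\mapsto n-k$ on \eqref{AW}; the identity $(az;q)_k=(-az)^kq^{\binom{k}{2}}(az;q)_n/(q^{1-n}/(az);q)_n$ extracts a factor $z^n q^{n(\alpha/2+1/4)}$ and converts one of the two $z^{\pm}$ slots into its $z^{-1}$-reciprocal, explaining the uniform $z^n q^{n(\alpha/2+1/4)}$ prefactor and the mixed presence of $z$ and $z^{-1}$ in the numerator and denominator slots of \eqref{ctsqJ:def3}--\eqref{ctsqJ:def3e}. Cycling which Block 1 representation we reverse, and composing with a subsequent Sears transformation when appropriate, generates the six variants.

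The main obstacle will be the \emph{exhaustivity} assertion, not the derivations themselves. The Bailey--Whipple symmetry group of a generic terminating balanced ${}_4\phi_3$ has order $1440$, acting on the array of numerator and denominator parameters by permutations, reversal, and Sears' involution. To conclude that the three blocks above together with the $z\mapsto z^{-1}$ involution exhaust all representations, I would enumerate the orbit of this group, verify that the tuple associated to \eqref{abcd} produces no accidental collisions among parameters (which could otherwise spawn extra ${}_4\phi_3$'s or collapse distinct ones), and finally quotient by the symmetry $x=\tfrac12(z+z^{-1})$ which identifies $z$ with $z^{-1}$. Most of this bookkeeping is routine but tedious; the conceptual hazard is to ensure that the group-theoretic count of orbits precisely matches the $4+4+6$ representations listed, plus their $z\mapsto z^{-1}$ images, and that no orbit representative has been silently omitted.
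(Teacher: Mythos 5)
Your derivational outline is plausible but it is a genuinely different route from the paper's. The paper does not re-derive the classification at all: it invokes the already-established \emph{exhaustive} list of balanced ${}_4\phi_3$ representations of the Askey--Wilson polynomials from \cite[Theorem~7, (13)--(15)]{CohlCostasSantos20b}, substitutes the parameter tuple \eqref{cqJAWparms} dictated by \eqref{inter:AWqJac} into all permutations of those three families to get the three blocks, and then observes that only \eqref{ctsqJ:def3}--\eqref{ctsqJ:def3e} fail to be invariant under $z\mapsto z^{-1}$, which yields the six extra representations. Exhaustivity is therefore inherited wholesale from the cited theorem. Two smaller points on your derivations: the roles of Sears' transformation and series reversal appear to be swapped. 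The lower parameter $q^{-\alpha-\beta-2n}=q^{1-n}/q^{\alpha+\beta+n+1}$ that characterizes \eqref{ctsqJ:def2b}--\eqref{ctsqJ:def2c} is the fingerprint of the reversal $k\mapsto n-k$, while the mixed $z$/$z^{-1}$ structure of \eqref{ctsqJ:def3}--\eqref{ctsqJ:def3e} is what Sears' transformation produces when the pivot parameter is one of the $z$-dependent entries.

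The genuine gap is the exhaustivity claim, which is the actual content of the theorem beyond a list of identities. You correctly identify it as the main obstacle, but you only describe a program (enumerate the orbit of the invariance group, check for accidental parameter collisions, quotient by $z\mapsto z^{-1}$) without carrying it out; as written, nothing in the proposal certifies that the $4+4+6$ representations plus their $z\mapsto z^{-1}$ images are all of them, or that none has been omitted. Note also that the relevant invariance group of the terminating balanced ${}_4\phi_3$ is the symmetric group $S_6$ of order $720$ (as the paper itself recalls in the remark following the theorem), not a group of order $1440$, so the count you would be matching against is off by a factor of two. To close the gap you would either have to execute the orbit enumeration in full --- which essentially reproduces \cite[Theorem~7]{CohlCostasSantos20b} --- or simply cite that result, which is what the paper does.
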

\medskip
\begin{proof}
In \cite[Theorem 7, (13)--(15)]{CohlCostasSantos20b} an exhaustive 
description of all balanced ${}_4\phi_3$
representations of the Askey--Wilson polynomials
are given with parameters ${\bf a}:=
\{a_1,a_2,a_3,a_4\}$. By using \eqref{inter:AWqJac}
which is the description of the continuous $q$-Jacobi polynomials 
in terms of the Askey--Wilson polynomials, one is able to obtain an 
exhaustive list of all balanced ${}_4\phi_3$ representations of the 
continuous $q$-Jacobi polynomials. In line with \eqref{inter:AWqJac}, 
we choose 
\begin{equation}
{\bf a}:=\left\{q^{\frac12\alpha+\frac14}
,q^{\frac12\alpha+\frac34},-q^{\frac12\beta+\frac14}
,-q^{\frac12\beta+\frac34}\right\}.
\label{cqJAWparms}
\end{equation}
By evaluating all permutations of 
\eqref{cqJAWparms} in \cite[(13), (14), (15)]{CohlCostasSantos20b}, 
one may obtain \eqref{ctsqJ:def1}--\eqref{ctsqJ:def1d}, 
\eqref{ctsqJ:def2d}--\eqref{ctsqJ:def2c}, 
\eqref{ctsqJ:def3}--\eqref{ctsqJ:def3e} 
respectively. Since the continuous $q$-Jacobi polynomials are a 
function of $x=\frac12(z+z^{-1})$, and 
\eqref{ctsqJ:def1}--\eqref{ctsqJ:def2c}
are invariant under the interchange of $z\mapsto z^{-1}$, one 
may as well apply this replacement to \eqref{ctsqJ:def3}--\eqref{ctsqJ:def3e} 
to obtain six alternative representations of the continuous 
$q$-Jacobi polynomials. This completes the proof.
\end{proof}
\begin{rem}
Note that one may also obtain an exhaustive list of all 
continuous $q$-Jacobi polynomial terminating ${}_8W_7$ 
representations by applying the above procedure to 
\cite[(16)--(19)]{CohlCostasSantos20b}. However, we will omit
this computation for the present work. We might also add that 
by counting the members of the equivalence classes of 
terminating ${}_4\phi_3$ representations of the continuous 
$q$-Jacobi polynomials, one may explore the symmetry group of 
these transformations which should necessarily be a subgroup
of the symmetric group $S_6$, the symmetry group of the 
terminating representations of the Askey--Wilson polynomials, 
see \cite[Section 4]{CohlCostasSantos20b}, and references therein.
\end{rem}
\section{Specialization values of continuous $q$-Jacobi polynomials}
\label{SpecctsqJac}
\noindent
For the following special values for the argument of the continuous 
$q$-Jacobi polynomials, we are able to re-express them in terms
of Askey--Wilson polynomials with degree $m$
as opposed to $n$.

\begin{thm}
\label{thmspecab}
Let $n,m\in\N_0$, $q\in\CCdag$, $\alpha,\beta\in\C$. 
The following 
continuous $q$-Jacobi polynomial specializations
have alternative Askey--Wilson representations given as follows
\begin{eqnarray}
&&\label{sp1}\hspace{0.0cm}P_n^{(\alpha,\beta)}(\tfrac12(q^{\frac12\alpha
+\frac14+m}+q^{-\frac12\alpha-\frac14-m})|q)
=\frac{
(q^{\alpha+1};q)_n\left(q^{\frac12(\alpha+\beta+1)}\right)^m}
{(q;q)_n(q^{\alpha+1},-q^{\frac12(\alpha+\beta+1)},
-q^{\frac12(\alpha+\beta+2)};q)_m}\nonumber\\
&&\hspace{3.5cm}\times p_m(\tfrac12(q^{\frac{\alpha+\beta+1}2+n}
+q^{-\frac{\alpha+\beta+1}2-n});q^{\frac12(\alpha+\beta+1)},
q^{\frac12(\alpha-\beta+1)},-q^\frac12,-1|q),\\
&&\label{sp2}\hspace{0.0cm}P_n^{(\alpha,\beta)}(\tfrac12(q^{\frac12\alpha
+\frac34+m}+q^{-\frac12\alpha-\frac34-m})|q)
=
\frac{
q^{-\frac n2}\left(q^{\frac12(\alpha+\beta+1)}\right)^m (q^{\alpha+1},
-q^{\frac{\alpha+\beta+3}{2}};q)_n}
{(q,-q^{\frac{\alpha+\beta+1}{2}};q)_n(q^{\alpha+1},-q^{\frac12(\alpha+\beta+2)},
-q^{\frac12(\alpha+\beta+3)};q)_m}\nonumber\\
&&\hspace{3.5cm}\times p_m(\tfrac12(q^{\frac{\alpha+\beta+1}2+n}
+q^{-\frac{\alpha+\beta+1}2-n});
q^{\frac12(\alpha+\beta+1)},q^{\frac12(\alpha-\beta+1)},-q^\frac12,-q|q),\\
&&\hspace{0.0cm}\label{sp3}P_n^{(\alpha,\beta)}
(-\tfrac12(q^{\frac12\beta+\frac14+m}
+q^{-\frac12\beta-\frac14-m})|q)
=
\frac{\left(-q^{\frac12(\alpha-\beta)}\right)^n
\left(q^{\frac12(\alpha+\beta+1)}\right)^m(q^{\beta+1};q)_n}
{(q;q)_n(q^{\beta+1},-q^{\frac12(\alpha+\beta+1)},
-q^{\frac12(\alpha+\beta+2)};q)_m}\nonumber\\
&&\hspace{3.5cm}\times p_m(\tfrac12(q^{\frac{\alpha+\beta+1}2+n}
+q^{-\frac{\alpha+\beta+1}2-n});
q^{\frac12(\alpha+\beta+1)},q^{\frac12(\beta-\alpha+1)},-q^\frac12,-1|q),\\
&&\hspace{0.0cm}\label{sp4}P_n^{(\alpha,\beta)}
(-\tfrac12(q^{\frac12\beta+\frac34+m}+q^{-\frac12\beta-\frac34-m})|q)
=\frac{\left(-q^{\frac12(\alpha-\beta-1)}\right)^n
\left(q^{\frac12(\alpha+\beta+1)}\right)^m
(q^{\beta+1},-q^{\frac12(\alpha+\beta+3)};q)_n}
{(q,-q^{\frac12(\alpha+\beta+1)};q)_n(q^{\beta+1},-q^{\frac12(\alpha+\beta+2)},
-q^{\frac12(\alpha+\beta+3)};q)_m}\nonumber\\
&&\hspace{3.5cm}\times p_m(\tfrac12(q^{\frac{\alpha+\beta+1}2+n}
+q^{-\frac{\alpha+\beta+1}2-n});
q^{\frac12(\alpha+\beta+1)},q^{\frac12(\beta-\alpha+1)},-q^\frac12,-q|q).
\end{eqnarray}
\end{thm}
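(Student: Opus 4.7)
The plan is to derive \eqref{sp1}--\eqref{sp4} by specializing the four ${}_4\phi_3$ representations \eqref{ctsqJ:def1}--\eqref{ctsqJ:def1d} at values of $z$ for which one of the Laurent factors $\pm q^{\bullet}z^{\pm}$ in the upper parameters collapses to $q^{-m}$, thereby terminating the series at the $m$-th term. Concretely: \eqref{sp1} follows from \eqref{ctsqJ:def1} with $z=q^{\frac12\alpha+\frac14+m}$, \eqref{sp2} from \eqref{ctsqJ:def1b} with $z=q^{\frac12\alpha+\frac34+m}$, \eqref{sp3} from \eqref{ctsqJ:def1c} with $z=-q^{-\frac12\beta-\frac14-m}$, and \eqref{sp4} from \eqref{ctsqJ:def1d} with $z=-q^{-\frac12\beta-\frac34-m}$. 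In each case one verifies that $\tfrac12(z+z^{-1})$ matches the argument on the left-hand side (using the $z\leftrightarrow z^{-1}$ invariance of $P_n^{(\alpha,\beta)}$), and the surviving Laurent partner becomes one of $q^{\alpha+\frac12+m}$, $q^{\alpha+\frac32+m}$, $q^{\beta+\frac12+m}$, $q^{\beta+\frac32+m}$.

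Next, each resulting terminating balanced ${}_4\phi_3$ of order $m$ must be identified with the Askey--Wilson polynomial ${}_4\phi_3$ on the right via \eqref{AW}. For \eqref{sp1}, I would take the Askey--Wilson parameters $(a',b',c',d')=(q^{\frac12(\alpha+\beta+1)},q^{\frac12(\alpha-\beta+1)},-q^{\frac12},-1)$, and check that $(a'b',a'c',a'd')=(q^{\alpha+1},-q^{\frac12(\alpha+\beta+2)},-q^{\frac12(\alpha+\beta+1)})$ match the three lower parameters of the specialized \eqref{ctsqJ:def1}; that $q^{m-1}a'b'c'd'=q^{\alpha+\frac12+m}$ matches the surviving upper parameter; and that $a'z'^{\pm}=\{q^{\alpha+\beta+n+1},q^{-n}\}$ forces $z'=q^{\frac{\alpha+\beta+1}{2}+n}$, reproducing the Askey--Wilson argument asserted in \eqref{sp1}. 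The same bookkeeping, with $\alpha\leftrightarrow\beta$ for \eqref{sp3}, \eqref{sp4} and with the shift $(c',d')\mapsto(-q^{\frac12},-q)$ for \eqref{sp2}, \eqref{sp4}, produces the Askey--Wilson data asserted in the remaining three identities. The scalar prefactors then reconcile automatically: the coefficient $\bigl(q^{\frac12(\alpha+\beta+1)}\bigr)^m\bigm/(a'b',a'c',a'd';q)_m$ appearing on each right-hand side is precisely $a'^{m}\bigm/(a'b',a'c',a'd';q)_m$, which cancels the normalization $a'^{-m}(a'b',a'c',a'd';q)_m$ of \eqref{AW} and returns exactly the scalar in front of the ${}_4\phi_3$ in \eqref{ctsqJ:def1}--\eqref{ctsqJ:def1d}; any extra sign and power-of-$q$ factors on the right (e.g.\ $(-q^{(\alpha-\beta)/2})^n$ in \eqref{sp3}, or $q^{-n/2}$ in \eqref{sp2}) are simply inherited from those prefactors.

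The main obstacle is purely parameter bookkeeping: three lower and four upper ${}_4\phi_3$ parameters, together with the Askey--Wilson normalization and the overall scalar, must all match simultaneously in each of the four identities. No deeper hypergeometric transformation---in particular, Singh's quadratic \eqref{Singhquad} is not invoked---is required, because the termination produced by $q^{\bullet}z^{\pm}=q^{-m}$ already reduces both sides to the same terminating balanced ${}_4\phi_3$, and the rest is a finite check.
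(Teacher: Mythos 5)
Your proposal is correct and follows essentially the same route as the paper: the paper likewise specializes the argument so that one Laurent entry of the balanced ${}_4\phi_3$ representations collapses to $q^{-m}$ (yielding its displayed intermediate identities \eqref{qhnm1}--\eqref{qhnm4}, which are exactly your specializations of \eqref{ctsqJ:def1}--\eqref{ctsqJ:def1d}), and then reads the resulting terminating series as an Askey--Wilson polynomial of degree $m$ by solving for the parameters $\mathbf{a}$ in \eqref{AW}. Your write-up is in fact more explicit than the paper's about which representation and which value of $z$ produces each identity and about how the prefactors reconcile.
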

\begin{proof}
Start by considering the Askey--Wilson 
polynomial representations of these continuous
$q$-Jacobi polynomials 
\eqref{inter:AWqJac}
with these particular arguments. The specific
arguments provided along with the 
identification that $x=\frac12(z+z^{-1})$ 
and therefore
\begin{equation}
z=z_m\in\{
q^{\frac12\alpha+\frac14+m},
q^{\frac12\alpha+\frac34+m},
-q^{\frac12\beta+\frac14+m},
-q^{\frac12\beta+\frac34+m}\},
\end{equation}
for \eqref{sp1}--\eqref{sp4} respectively. Providing the particular 
specializations of the argument produces the following ${}_4\phi_3$ 
representations
\begin{eqnarray}
&&\hspace{-0.6cm}P_n^{(\alpha,\beta)}(\tfrac12(q^{\frac\alpha2
+\frac14+m}+q^{-\frac\alpha2-\frac14-m})|q)
=\frac{(q^{\alpha+1};q)_n}{(q;q)_n}
\qhyp43{q^{-n},q^{\alpha+\beta+1+n},q^{-m},q^{\alpha+\frac12+m}}
{q^{\alpha+1},-q^{\frac{\alpha+\beta+1}{2}},
-q^{\frac{\alpha+\beta+2}{2}}}{q,q},\label{qhnm1}\\
&&\hspace{-0.6cm}P_n^{(\alpha,\beta)}(\tfrac12(q^{\frac\alpha2+\frac34+m}
+q^{-\frac\alpha2-\frac34-m})|q)
\nonumber\\&&\hspace{0.3cm}
=q^{-\frac n2}\frac{(q^{\alpha+1},-q^{\frac{\alpha+\beta+3}{2}};q)_n}
{(q,-q^{\frac{\alpha+\beta+1}{2}};q)_n}
\qhyp43{q^{-n},q^{\alpha+\beta+1+n},q^{-m},q^{\alpha+\frac32+m}}
{q^{\alpha+1},-q^{\frac{\alpha+\beta+2}{2}},
-q^{\frac{\alpha+\beta+3}{2}}}{q,q},\label{qhnm2}\\
&&\hspace{-0.6cm}P_n^{(\alpha,\beta)}(-\tfrac12(q^{\frac\beta2+\frac14+m}
+q^{-\frac\beta2-\frac14-m})|q)
\nonumber\\&&\hspace{0.3cm}=\left(-q^{\frac{\alpha-\beta}{2}}\right)^n
\frac{(q^{\beta+1};q)_n}{(q;q)_n}
\qhyp43{q^{-n},q^{\alpha+\beta+1+n},q^{-m},q^{m+\beta+\frac 12}}
{q^{\beta+1},-q^{\frac{\alpha+\beta+1}{2}},
-q^{\frac{\alpha+\beta+2}{2}}}{q,q},\label{qhnm3}\\
&&\hspace{-0.6cm}P_n^{(\alpha,\beta)}(-\tfrac12(q^{\frac\beta2
+\frac34+m}+q^{-\frac\beta2-\frac34-m})|q)\nonumber\\&&\hspace{0.3cm}
=\left(-q^{\frac{\alpha-\beta-1}{2}}\right)^n\frac{(q^{\beta+1},
-q^{\frac{\alpha+\beta+3}{2}};q)_n}{(q,-q^{\frac{\alpha+\beta+1}{2}};q)_n}
\qhyp43{q^{-n},q^{\alpha+\beta+1+n},q^{-m},q^{\beta+\frac 32+m}}
{q^{\beta+1},-q^{\frac{\alpha+\beta+2}{2}},
-q^{\frac{\alpha+\beta+3}{2}}}{q,q}.
\label{qhnm4}
\end{eqnarray}
It is then straightforward to convert these particular 
continuous $q$-Jacobi polynomials into
a form where they can be represented as either
Askey--Wilson polynomials in the degree $n$ but also as 
Askey--Wilson polynomials in degree $m$. Solving for the 
particular values of the constants ${\bf a}$ in 
\eqref{AW} for the degree $m$ case completes the proof.
\end{proof}
One, therefore, has the following special values for the continuous 
$q$-Jacobi polynomials which also follow directly from the identity
\eqref{specAW}.

\begin{cor}
\label{spec}
Let $n,m\in\N_0$, $q\in\CCdag$, $\alpha,\beta\in\C$. Then
\begin{eqnarray}
&&\hspace{-0.5cm}P_n^{(\alpha,\beta)}(\tfrac12(q^{\frac12\alpha
+\frac14}+q^{-\frac12\alpha-\frac14})|q)
=\frac{(q^{\alpha+1};q)_n}{(q;q)_n},
\label{spec1or}\\
&&\hspace{-0.5cm}P_n^{(\alpha,\beta)}(\tfrac12(q^{\frac12\alpha+\frac34}
+q^{-\frac12\alpha-\frac34})|q)
=q^{-\frac n2}\frac{(q^{\alpha+1},-q^{\frac12(\alpha+\beta+3)};q)_n}
{(q,-q^{\frac12(\alpha+\beta+1)};q)_n},\label{spec2}\\
&&\hspace{-0.5cm}P_n^{(\alpha,\beta)}(-\tfrac12(q^{\frac12\beta
+\frac14}+q^{-\frac12\beta-\frac14})|q)
=\left(-q^{\frac{\alpha-\beta}{2}}\right)^n
\frac{(q^{\beta+1};q)_n}{(q;q)_n},\label{spec3}\\
&&\hspace{-0.5cm}P_n^{(\alpha,\beta)}(-\tfrac12(q^{\frac12\beta
+\frac34}+q^{-\frac12\beta-\frac34})|q)
=\left(-q^{\frac{\alpha-\beta-1}{2}}\right)^n
\frac{(q^{\beta+1},-q^{\frac12(\alpha+\beta+3)};q)_n}
{(q,-q^{\frac12(\alpha+\beta+1)};q)_n}.
\label{spec4}
\end{eqnarray}
\end{cor}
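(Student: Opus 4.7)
The plan is to recognize that the four arguments appearing on the left-hand sides of \eqref{spec1or}--\eqref{spec4} are, respectively, $\tfrac12(a+a^{-1})$, $\tfrac12(b+b^{-1})$, $\tfrac12(c+c^{-1})$, $\tfrac12(d+d^{-1})$, where ${\bf a}=\{a,b,c,d\}$ is exactly the Askey--Wilson parameter multiset \eqref{abcd} that relates the continuous $q$-Jacobi polynomials to the Askey--Wilson polynomials via \eqref{inter:AWqJac}. So the natural approach is to plug \eqref{specAW} (and its three analogues, which are noted directly after \eqref{specAW} and arise by permuting the roles of $a,b,c,d$) into \eqref{inter:AWqJac}, then simplify.

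In detail, to obtain \eqref{spec1or} I would write
\[
P_n^{(\alpha,\beta)}(\tfrac12(a+a^{-1})|q)=\frac{q^{(\frac{\alpha}{2}+\frac14)n}}{(q,-q^{\frac{\alpha+\beta+1}{2}},-q^{\frac{\alpha+\beta+2}{2}};q)_n}\,a^{-n}(ab,ac,ad;q)_n,
\]
and then observe that $a^{-n}=q^{-n(\frac{\alpha}{2}+\frac14)}$ cancels the explicit prefactor, while the three numerator factors $(ab;q)_n=(q^{\alpha+1};q)_n$, $(ac;q)_n=(-q^{\frac{\alpha+\beta+1}{2}};q)_n$, $(ad;q)_n=(-q^{\frac{\alpha+\beta+2}{2}};q)_n$ exactly match (up to the $(q^{\alpha+1};q)_n$ factor) the denominator, leaving $(q^{\alpha+1};q)_n/(q;q)_n$. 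The other three identities follow by the same bookkeeping, starting from $b^{-n}(ab,bc,bd;q)_n$, $c^{-n}(ac,bc,cd;q)_n$, and $d^{-n}(ad,bd,cd;q)_n$; the residual prefactors $q^{-n/2}$, $(-q^{(\alpha-\beta)/2})^n$, $(-q^{(\alpha-\beta-1)/2})^n$ in \eqref{spec2}--\eqref{spec4} appear as the ratios of $b^{-n},c^{-n},d^{-n}$ to $a^{-n}$ against the remaining Pochhammer quotient.

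Equivalently, and even more quickly, I would simply set $m=0$ in Theorem \ref{thmspecab}: the Askey--Wilson polynomial $p_0$ on the right-hand side of each of \eqref{sp1}--\eqref{sp4} equals the empty product $1$, and every $q$-shifted factorial with Pochhammer index $m=0$ is also $1$, so only the $n$-dependent prefactors survive, reproducing \eqref{spec1or}--\eqref{spec4} verbatim. There is essentially no obstacle here; the only point requiring care is the sign bookkeeping in \eqref{spec3}--\eqref{spec4}, where the factors $c=-q^{\frac12\beta+\frac14}$ and $d=-q^{\frac12\beta+\frac34}$ contribute $(-1)^n$ inside $c^{-n}$ and $d^{-n}$, which accounts for the overall sign $(-1)^n$ visible on the right-hand sides.
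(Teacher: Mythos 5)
Your proposal is correct and matches the paper: the paper's own proof is exactly your second route (set $m=0$ in Theorem \ref{thmspecab}), and the direct substitution of \eqref{specAW} into \eqref{inter:AWqJac} that you carry out in detail is precisely the alternative the authors allude to in the sentence immediately preceding the corollary. Your bookkeeping of the prefactors and signs checks out.
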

\begin{proof}
Taking $m=0$ values in Theorem 
\ref{thmspecab} completes the proof.
\end{proof}
\begin{rem}
One may inquire regarding the computation of
perhaps product formulas for
\begin{eqnarray}
P_n^{(\alpha,\beta)}(-\tfrac12(q^{\frac12\alpha+\frac14}
+q^{-\frac12\alpha-\frac14})|q),
P_n^{(\alpha,\beta)}(-\tfrac12(q^{\frac12\alpha+\frac34}
+q^{-\frac12\alpha-\frac34})|q),\nonumber\\
P_n^{(\alpha,\beta)}(\tfrac12(q^{\frac12\beta+\frac14}
+q^{-\frac12\beta-\frac14})|q),
P_n^{(\alpha,\beta)}(\tfrac12(q^{\frac12\beta+\frac34}
+q^{-\frac12\beta-\frac34})|q),\nonumber
\end{eqnarray}
using
symmetry \eqref{ctsqJsymmetry}
and the special values for the Askey--Wilson polynomials given
\cite[(114)]{KoornwinderKLSadd}. However, this is not possible 
since the interchange of $\alpha\leftrightarrow\beta$ with the 
invariance of the argument prevents the ${}_4\phi_3$s 
from being summable.
\end{rem}

One may also express the results of
Theorem \ref{thmspecab} in terms of 
$q$-Racah polynomials
\eqref{qRdefn}.

\begin{thm}
Let $q\in\CCdag$. Then the specialization
for the continuous $q$-Jacobi polynomials
there are the following special values
given in terms of $q$-Racah polynomials
as follows 
\begin{eqnarray}
&&\hspace{-0.6cm}P_n^{(\alpha,\beta)}(\tfrac12(q^{\frac\alpha2+\frac14+m}
+q^{-\frac\alpha2-\frac14 m})|q)\nonumber\\&&\hspace{0.3cm}
=\frac{(q^{\alpha+1};q)_n}{(q;q)_n}
R_n(q^{\alpha+\frac12+m}+q^{-m};q^\alpha,q^\beta,
-q^{\frac{\alpha+\beta}{2}},-q^{\frac{\alpha-\beta-1}{2}}|q)\label{qRn1}\\
&&\hspace{0.3cm}=\frac{(q^{\alpha+1};q)_n}{
(q;q)_n}R_m(q^{\alpha+\beta+1+n}+q^{-n};-q^{\frac{\alpha+\beta}{2}},
-q^{\frac{\alpha-\beta-1}{2}},q^{\alpha},q^\beta|q),\label{qRm1}\\
&&\hspace{-0.6cm}P_n^{(\alpha,\beta)}(\tfrac12(q^{\frac\alpha2+\frac34+m}
+q^{-\frac\alpha2-\frac34-m})|q)\nonumber\\&&\hspace{0.3cm}
=q^{-\frac n2}\frac{(q^{\alpha+1},-q^{\frac{\alpha+\beta+3}{2}};q)_n}
{(q,-q^{\frac{\alpha+\beta+1}{2}};q)_n}
R_n(q^{\alpha+\frac32+m}+q^{-m};q^\alpha,q^\beta,
-q^{\frac{\alpha+\beta+1}{2}},-q^{\frac{\alpha-\beta}{2}}|q)\label{qRn2}\\
&&\hspace{0.3cm}=q^{-\frac n2}
\frac{(q^{\alpha+1},-q^{\frac{\alpha+\beta+3}{2}};q)_n}
{(q,-q^{\frac{\alpha+\beta+1}{2}};q)_n}
R_m(q^{\alpha+\beta+1+n}+q^{-n};-q^{\frac{\alpha+\beta+1}{2}},
-q^{\frac{\alpha-\beta}{2}},q^{\alpha},q^\beta|q),\label{qRm2}\\
&&\hspace{-0.6cm}P_n^{(\alpha,\beta)}(-\tfrac12(q^{\frac\beta2+\frac14+m}
+q^{-\frac\beta2-\frac14-m})|q)\nonumber\\
&&\hspace{0.3cm}=\left(-q^{\frac{\alpha-\beta}{2}}\right)^n
\frac{(q^{\beta+1};q)_n}{(q;q)_n}R_n(q^{\beta+\frac12+m}+q^{-m};
q^\beta, q^\alpha,-q^{\frac{\beta-\alpha-1}{2}},-q^{\frac{\alpha
+\beta}{2}}|q)\label{qRn3}\\
&&\hspace{0.3cm}=\left(-q^{\frac{\alpha-\beta}{2}}\right)^n
\frac{(q^{\beta+1};q)_n}{(q;q)_n}
R_m(q^{\alpha+\beta+1+n}+q^{-n};-q^{\frac{\alpha+\beta}{2}},
-q^{\frac{\beta-\alpha-1}{2}},q^{\beta},q^\alpha|q),\label{qRm3}\\
&&\hspace{-0.6cm}P_n^{(\alpha,\beta)}(-\tfrac12(q^{\frac\beta2+\frac34+m}
+q^{-\frac\beta2-\frac34-m})|q)\nonumber\\&&\hspace{0.3cm}
=\left(-q^{\frac{\alpha-\beta-1}{2}}\right)^n\frac{(q^{\beta+1},
-q^{\frac{\alpha+\beta+3}{2}};q)_n}{(q,-q^{\frac{\alpha+\beta+1}{2}};q)_n}
R_n(q^{\beta+\frac32+m}+q^{-m};q^\beta,q^\alpha,
-q^{\frac{\beta+\alpha+1}{2}},-q^{\frac{\beta-\alpha}{2}}|q)\label{qRn4}\\
&&\hspace{0.3cm}=\left(-q^{\frac{\alpha-\beta-1}{2}}\right)^n
\frac{(q^{\beta+1},-q^{\frac{\alpha+\beta+3}{2}};q)_n}
{(q,-q^{\frac{\alpha+\beta+1}{2}};q)_n}
R_m(q^{\alpha+\beta+1+n}+q^{-n};-q^{\frac{\alpha+\beta+1}{2}},
-q^{\frac{\beta-\beta}{2}},q^\beta,q^{\alpha}|q).\label{qRm4}
\end{eqnarray}
\label{dualthm1}
\end{thm}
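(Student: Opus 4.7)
The plan is to start from the four ${}_4\phi_3$ representations \eqref{qhnm1}--\eqref{qhnm4} that were already established inside the proof of Theorem~\ref{thmspecab}, recognize each right-hand side as a terminating $q$-Racah polynomial of degree $n$ via the definition \eqref{qRdefn}, and then obtain the companion identities in degree $m$ by a single application of the duality relation \eqref{eq:duality}.

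For \eqref{qhnm1}, I match parameters of its ${}_4\phi_3$ against \eqref{qRdefn}. The two upper entries $q^{\alpha+\beta+1+n}$ and $q^{\alpha+\frac12+m}$ force
\[
\bar{\alpha}\bar{\beta}=q^{\alpha+\beta},\qquad \bar{\gamma}\bar{\delta}=q^{\alpha-\frac12},
\]
and pairing the three lower entries $q^{\alpha+1},\,-q^{(\alpha+\beta+1)/2},\,-q^{(\alpha+\beta+2)/2}$ with $q\bar{\alpha},\,q\bar{\beta}\bar{\delta},\,q\bar{\gamma}$ pins down $\bar{\alpha}=q^\alpha$, $\bar{\beta}=q^\beta$, $\bar{\gamma}=-q^{(\alpha+\beta)/2}$, $\bar{\delta}=-q^{(\alpha-\beta-1)/2}$. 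A direct check that $\mu(m;\bar{\gamma},\bar{\delta}|q)=q^{-m}+q^{\alpha+\frac12+m}$ then yields \eqref{qRn1}. The same procedure applied to \eqref{qhnm2}, \eqref{qhnm3}, \eqref{qhnm4} gives \eqref{qRn2}, \eqref{qRn3}, \eqref{qRn4}; the only modifications are half-integer shifts in $\bar{\gamma},\bar{\delta}$ and, for \eqref{qRn3}--\eqref{qRn4}, the role swap $\alpha\leftrightarrow\beta$ that mirrors the parity relation \eqref{ctsqJsymmetry}.

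Then I apply \eqref{eq:duality} to each of \eqref{qRn1}--\eqref{qRn4}. Duality interchanges the pairs $(\bar{\alpha},\bar{\beta})\leftrightarrow(\bar{\gamma},\bar{\delta})$ and the variable/degree roles $n\leftrightarrow m$, and a short computation shows that the dual argument $\mu(n;\bar{\gamma}',\bar{\delta}'|q)$ equals $q^{-n}+q^{\alpha+\beta+1+n}$ in every case, since the dual product $\bar{\gamma}'\bar{\delta}'$ is always $q^\alpha\cdot q^\beta=q^{\alpha+\beta}$. This produces \eqref{qRm1}, \eqref{qRm2}, \eqref{qRm3}, \eqref{qRm4}, completing the theorem.

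The main obstacle I anticipate is purely bookkeeping: with two negative denominator entries available to be assigned to $q\bar{\beta}\bar{\delta}$ and $q\bar{\gamma}$, one has to pick the ordering that is consistent with the already-determined values of $\bar{\alpha}\bar{\beta}$ and $\bar{\gamma}\bar{\delta}$, and one has to keep track of four parallel cases. I should also note that the terminating condition \eqref{qRcond} is not needed here, because termination is supplied by the $q^{-n}$ factor and the identities are formal ${}_4\phi_3$ equalities; hence no extra genericity restriction on $\alpha,\beta$ beyond those stated in Theorem~\ref{thmspecab} is required.
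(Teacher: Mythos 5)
Your proposal is correct and follows essentially the same route as the paper: the paper's proof likewise takes the ${}_4\phi_3$ representations \eqref{qhnm1}--\eqref{qhnm4} from the proof of Theorem~\ref{thmspecab} and matches them against the $q$-Racah definition \eqref{qRdefn}, read either as a polynomial of degree $n$ or of degree $m$ (which is exactly what your appeal to the duality \eqref{eq:duality} accomplishes). Your parameter bookkeeping checks out, and incidentally it shows that the exponent $\frac{\beta-\beta}{2}$ in \eqref{qRm4} should read $\frac{\beta-\alpha}{2}$.
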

\begin{proof}
Start with Theorem \ref{thmspecab} and utilize
\eqref{qhnm1}--\eqref{qhnm4} with \eqref{qRdefn} to
write these hypergeometric representations in 
terms of $q$-Racah polynomials. This completes the proof.
\end{proof}
\begin{cor}Let $m\in\N_0$. 
If $\beta=-\frac12$,
$\beta=\frac12$, $\alpha=-\frac12$, 
$\alpha=\frac12$, respectively for the following specialized
continuous $q$-Jacobi polynomials
\begin{eqnarray}
P_n^{(\alpha,\beta)}(\tfrac12(q^{\frac12\alpha+\frac14+m}
+q^{-\frac12\alpha-\frac14-m})|q),
P_n^{(\alpha,\beta)}(\tfrac12(q^{\frac12\alpha+\frac34+m}
+q^{-\frac12\alpha-\frac34-m})|q),\nonumber\\
P_n^{(\alpha,\beta)}(-\tfrac12(q^{\frac12\beta+\frac14+m}
+q^{-\frac12\beta-\frac14-m})|q),
P_n^{(\alpha,\beta)}(-\tfrac12(q^{\frac12\beta+\frac34+m}
+q^{-\frac12\beta-\frac34-m})|q),\nonumber
\end{eqnarray}
then one 
has the following duality relations
\begin{eqnarray}
&&\hspace{-1cm}R_n(q^{\alpha+\frac12+m}+q^{-m};q^{\alpha},
q^{-\frac12},-q^{\frac12\alpha\pm\frac14}|q)
=
R_m(q^{\alpha+\frac12+n}+q^{-n};q^{\alpha},
q^{-\frac12},-q^{\frac12\alpha\pm\frac14}|q).
\end{eqnarray}
\end{cor}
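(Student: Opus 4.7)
The plan is to exploit Theorem~\ref{dualthm1}, which provides two distinct $q$-Racah representations of each specialized continuous $q$-Jacobi polynomial: one in the degree $n$ (from \eqref{qRn1}, \eqref{qRn2}, \eqref{qRn3}, or \eqref{qRn4}) and one in the degree $m$ (from \eqref{qRm1}, \eqref{qRm2}, \eqref{qRm3}, or \eqref{qRm4}). Equating the two representations for a given polynomial and cancelling the common $n$-dependent prefactor already yields an identity of the form $R_n(\mu(m);\cdot|q) = R_m(\mu(n);\cdot|q)$, which is essentially the content of the duality \eqref{eq:duality}. The role of the specialization $\beta=\pm\tfrac12$ or $\alpha=\pm\tfrac12$ is to make this duality collapse to a genuine \emph{self}-duality with identical parameter multisets on the two sides.

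The key computation I would carry out is to check that each stated specialization is precisely the one that forces $\bar{\alpha}\bar{\beta}=\bar{\gamma}\bar{\delta}$ in the relevant $q$-Racah representation. For instance, in \eqref{qRn1} we have $\bar{\alpha}\bar{\beta}=q^{\alpha+\beta}$ and $\bar{\gamma}\bar{\delta}=q^{\frac{\alpha+\beta}{2}+\frac{\alpha-\beta-1}{2}}=q^{\alpha-\frac12}$, so equality holds iff $\beta=-\tfrac12$; a similar one-line check disposes of the cases $\beta=\tfrac12$ in \eqref{qRn2}, $\alpha=-\tfrac12$ in \eqref{qRn3}, and $\alpha=\tfrac12$ in \eqref{qRn4}. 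When $\bar{\alpha}\bar{\beta}=\bar{\gamma}\bar{\delta}$, the argument $\mu(m;\bar{\gamma},\bar{\delta})$ coincides with $\mu(m;\bar{\alpha},\bar{\beta})$, and inspecting \eqref{qRdefn} the ${}_4\phi_3$ numerator $\{q^{-n},q^{n+1}\bar{\alpha}\bar{\beta},q^{-m},q^{m+1}\bar{\gamma}\bar{\delta}\}$ becomes manifestly symmetric under $n\leftrightarrow m$, while the denominator multiset $\{q\bar{\alpha},q\bar{\beta}\bar{\delta},q\bar{\gamma}\}$ is unchanged by the associated parameter swap (the product $\bar{\beta}\bar{\delta}$ is inert). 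Hence $R_n(\mu(m);\bar{\alpha},\bar{\beta},\bar{\gamma},\bar{\delta}|q)=R_m(\mu(n);\bar{\alpha},\bar{\beta},\bar{\gamma},\bar{\delta}|q)$, which after inserting the specialized values of $\bar{\alpha},\bar{\beta},\bar{\gamma},\bar{\delta}$ in each case reproduces the displayed formula.

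The final step is purely bookkeeping: for the first specialization one finds that $\bar{\gamma}$ and $\bar{\delta}$ both collapse to $-q^{\frac{\alpha}{2}-\frac14}$ (and analogously for the other three), and one verifies that this is consistent with the shorthand $-q^{\frac12\alpha\pm\frac14}$ used in the stated formula when the corresponding companion case (from the paired representation \eqref{qRn2}/\eqref{qRm2}, etc., where the collapsed value is $-q^{\frac{\alpha}{2}+\frac14}$) is also included.

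I expect the main obstacle to be exactly this bookkeeping: the ${}_4\phi_3$ in \eqref{qRdefn} is \emph{not} symmetric in all four parameters (only in $\bar{\beta}\leftrightarrow\bar{\delta}$, and in the joint swap $(\bar{\alpha},\bar{\beta})\leftrightarrow(\bar{\gamma},\bar{\delta})$ under the product condition), so one must carefully track which of the stated parameters plays each role to ensure the resulting identity is nontrivial and matches the claimed form for all four specializations rather than being a vacuous rearrangement.
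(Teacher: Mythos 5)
Your proposal is correct and follows essentially the same route as the paper, which simply substitutes $\beta=\pm\tfrac12$ or $\alpha=\pm\tfrac12$ into Theorem~\ref{dualthm1}; your identification of $\bar{\alpha}\bar{\beta}=\bar{\gamma}\bar{\delta}$ as the condition turning the generic duality \eqref{eq:duality} into a genuine self-duality, together with your case-by-case verification that each stated specialization enforces it in the corresponding representation, is exactly the mechanism the paper's one-line proof leaves implicit. The only caveat is in your final bookkeeping remark: for instance in \eqref{qRn2} with $\beta=\tfrac12$ the pair $(\bar{\gamma},\bar{\delta})$ becomes $(-q^{\frac{\alpha}{2}+\frac34},-q^{\frac{\alpha}{2}-\frac14})$ rather than two copies of $-q^{\frac{\alpha}{2}+\frac14}$, but this affects only the matching to the corollary's (itself loosely written) displayed parameter list, not the validity of the argument.
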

\begin{proof}
Replace $\beta=\pm \tfrac12$, $\alpha=\pm \tfrac12$ 
in Theorem \ref{dualthm1} completes the proof.
\end{proof}
Now consider the continuous $q$-Jacobi
polynomials with special argument
\begin{equation}
x_m^{\pm}:=\pm\tfrac12(q^{\frac14+\frac{m}2}\!+\!q^{-\frac14-\frac{m}2}), 
\label{xmpmdef}
\end{equation}
which is given in terms the Askey--Wilson polynomials through
\eqref{inter:AWqJac} as
\begin{eqnarray}
&&\hspace{-0.5cm}P_n^{(\alpha,\beta)}(x_m^{\pm}|q)
=\frac{\left(q^{\frac12\alpha+\frac14}\right)^n}
{(q,-q^{\frac{\alpha+\beta+1}{2}},-q^{\frac{\alpha+\beta+2}{2}};q)_n}
p_n(x_m^{\pm};
q^{\frac{\alpha}2+\frac14\topt{1}{3}},
-q^{\frac{\beta}2+\frac14\topt{1}{3}}
|q).
\label{ctsqJAWarg}
\end{eqnarray}
We consider some of the properties of the
continuous $q$-Jacobi polynomials with 
this argument. First, we show some 
alternative Askey--Wilson representations of these
polynomials with the special argument $x_m^{\pm}$.

\begin{thm}
Let $m,n\in\N_0$, $q\in\CCdag$, 
$\alpha,\beta\in\C$. 
Then
\begin{eqnarray}
&&\hspace{-0.4cm}
P_n^{(\alpha,\beta)}
(x_m^{\pm}|q)=
\frac{\left(\pm q^{\frac12\alpha+\frac14}\right)^n}{(q^\frac12,-q^\frac12,
-q^\frac{\alpha+\beta+1}{2};q^\frac12)_n}
p_n(x_m^{\pm};
q^{\frac14},
-q^{\frac14},
\pm q^{\frac{\alpha}2+\frac14},
\mp q^{\frac{\beta}2+\frac14}
|q^\frac12),\\
&&\hspace{0.7cm}=
\frac{
\left(\pm q^{\frac12\alpha}\right)^n
\left(q^{\frac{\alpha+\beta+1}{4}}\right)^m
(
\pm q^{\frac{\alpha+1}{2}},
\mp q^{\frac{\beta+1}{2}};q^\frac12)_n
}
{(q^\frac12,-q^\frac{\alpha+\beta+1}{2};q^\frac12)_n(-q^\frac12,\pm q^{\frac{\alpha+1}{2}},
\mp q^{\frac{\beta+1}{2}};q^\frac12)_m}\nonumber\\
&&\hspace{1.7cm}\times p_m(\tfrac12(q^{\frac{\alpha+\beta+1+2n}{4}}\!+\!q^{-\frac{\alpha+\beta+1+2n}4});
q^{\frac{\alpha+\beta+1}4},
-q^{\frac{-\alpha-\beta+1}4},
\pm q^{\frac{\alpha-\beta+1}4},
\mp q^{\frac{\beta-\alpha+1}4}
|q^\frac12).
\label{secondPmxm}
\end{eqnarray}
\label{thmTom}
\end{thm}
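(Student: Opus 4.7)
The plan is to reduce both equations to applications of the quadratic transformation \eqref{ctsJacquad} to the Askey--Wilson form \eqref{ctsqJAWarg} of $P_n^{(\alpha,\beta)}(x_m^\pm|q)$, followed, in the second case, by a degree-swap argument in the spirit of Theorem \ref{thmspecab}.

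For the first equation I would start from \eqref{ctsqJAWarg}, rewrite the base-$q$ Askey--Wilson polynomial $p_n(\cdot;q^{\alpha/2+1/4\topt{1}{3}},-q^{\beta/2+1/4\topt{1}{3}}|q)$ using \eqref{RAW}, and then apply \eqref{ctsJacquad} after the substitution $q\mapsto q^{1/2}$. This converts it into a base-$q^{1/2}$ Askey--Wilson polynomial with parameters $\{\pm q^{1/4},q^{\alpha/2+1/4},-q^{\beta/2+1/4}\}$. Collapsing the resulting $q$- and $q^{1/2}$-shifted factorial prefactors using the duplication identities $(q^{\alpha+1};q)_n = (q^{(\alpha+1)/2},-q^{(\alpha+1)/2};q^{1/2})_n$ and $(q;q)_n=(q^{1/2},-q^{1/2};q^{1/2})_n$ produces the $+$ instance. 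For the $-$ instance I combine this derivation with the parity relation \eqref{ctsqJsymmetry} and the Askey--Wilson parity identity $p_n(-x;-a,-b,-c,-d|q^{1/2})=(-1)^n p_n(x;a,b,c,d|q^{1/2})$, which effectively interchanges $\alpha\leftrightarrow\beta$ in the parameter list while flipping the sign of the argument, yielding the prescribed $(\pm,\mp)$ pattern.

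For the second equation I write the base-$q^{1/2}$ polynomial from the first step as a balanced ${}_4\phi_3$ via \eqref{AW}, choosing $a_1 = q^{1/4}$ as the distinguished parameter. Since $z=\pm q^{1/4+m/2}$ realizes the argument $x_m^\pm$, the products $a_1 z$ and $a_1 z^{-1}$ become $\pm q^{1/2+m/2}$ and $\pm q^{-m/2}$; the factor $\pm q^{-m/2}$ forces termination in degree $m$ with base $q^{1/2}$. The balancing condition on the new ${}_4\phi_3$, together with matching the three lower parameters, uniquely determines a degree-$m$ Askey--Wilson polynomial with distinguished parameter $b_1 = q^{(\alpha+\beta+1)/4}$, argument $\tfrac12(q^{(\alpha+\beta+1+2n)/4}+q^{-(\alpha+\beta+1+2n)/4})$, and remaining parameters $\{-q^{(-\alpha-\beta+1)/4},\pm q^{(\alpha-\beta+1)/4},\mp q^{(\beta-\alpha+1)/4}\}$ as stated. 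Converting back from $r_m$ to $p_m$ via \eqref{RAW} gathers the prefactors into the claimed form \eqref{secondPmxm}.

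The main obstacle is bookkeeping: every $\pm/\mp$ in the statement arises from the dual choices of $z$ versus $-z$ in $x=\tfrac12(z+z^{-1})$ and from their interplay with the Askey--Wilson parity, while the final $q$-Pochhammer simplifications require repeated use of the duplication identity for $(\cdot;q)_n$ combined with the base-change prefactor of \eqref{ctsJacquad}. Once these identities are handled uniformly in $\pm$, both equations follow in parallel.
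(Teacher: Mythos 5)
Your route is essentially the paper's. Both arguments pivot on Singh's quadratic transformation and on the observation that the resulting balanced ${}_4\phi_3$, whose numerator parameters split into the pairs $\{q^{-n},q^{\alpha+\beta+n+1}\}$ and $\{q^{-m},q^{m+1}\}$ (in the intermediate base), can be read either as an Askey--Wilson polynomial of degree $n$ or of degree $m$; your identification of the degree-$m$ data, namely $b_1=q^{(\alpha+\beta+1)/4}$, the argument $\tfrac12(q^{(\alpha+\beta+1+2n)/4}+q^{-(\alpha+\beta+1+2n)/4})$, and the remaining three parameters, agrees with \eqref{secondPmxm}. The only structural difference is the order of operations: the paper specializes $z=\pm q^{\frac14+\frac m2}$ in \eqref{ctsqJ:def1} after $q\mapsto q^2$, obtaining \eqref{qJacspec43}, and then applies \eqref{Singhquad} to the specialized series, whereas you apply the polynomial-level quadratic transformation \eqref{ctsJacquad}, rescaled to base $q^{\frac12}$, before specializing. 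The remark following the paper's proof explicitly notes that these two orderings are interchangeable, so this is not a genuinely different method.

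One step does fail as written. For the lower sign you keep $a_1=q^{\frac14}$ and assert that $a_1z^{-1}=-q^{-\frac m2}$ ``forces termination in degree $m$.'' It does not: $(-q^{-\frac m2};q^{\frac12})_k$ never vanishes, so only the upper sign produces a genuine $(q^{\frac12})^{-m}$ numerator entry. When $z=-q^{\frac14+\frac m2}$ you must instead distinguish the parameter $-q^{\frac14}$, for which $(-q^{\frac14})z^{-1}=q^{-\frac m2}$ and $(-q^{\frac14})z=q^{\frac{m+1}{2}}$; the matching of the three lower parameters then proceeds as in your upper-sign computation and reproduces the lower-sign case of \eqref{secondPmxm}. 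The same care about which member of the parameter multiset is distinguished, and about the $(-1)^n$ produced by the Askey--Wilson parity, is needed in your parity-based treatment of the lower-sign case of the first identity, where the sign of the prefactor $(\pm q^{\frac12\alpha+\frac14})^n$ must be tracked jointly with the sign flips of the argument and of the parameters.
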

\begin{proof}
Start with the representation of the continuous $q$-Jacobi polynomials 
in terms of the Askey--Wilson polynomials
\eqref{inter:AWqJac} with special argument 
$x_m^{\pm}$ \eqref{xmpmdef}, namely \eqref{ctsqJAWarg}.
Now consider $x=\frac12(z+z^{-1})$, therefore
$z=z_m:=\pm q^{\frac14+\frac{m}{2}}$
in \eqref{ctsqJ:def1} and replacing $q\mapsto q^2$ one obtains,
\begin{eqnarray}
&&P_n^{(\alpha,\beta)}(\pm\tfrac12(q^{\frac12+{m}}\!+\!q^{-\frac12-{m}})|q^2)=
\frac{(q^{2\alpha+2};q^2)_n}{(q^2;q^2)_n}\nonumber\\
&&\hspace{5.5cm}\times
\qhyp43{q^{-2n},q^{2\alpha+2\beta+2+2n},\pm q^{\alpha+1+m},\pm 
q^{\alpha-m}}{q^{2\alpha+2},-q^{\alpha+\beta+1},
-q^{\alpha+\beta+2}}{q^2,q^2}. \label{qJacspec43}
\end{eqnarray}
Applying the quadratic transformation 
\eqref{Singhquad}
produces
\[
P_n^{(\alpha,\beta)}(\pm\tfrac12(q^{\frac12+{m}}+q^{-\frac12
-{m}})|q^2)\!=\!(\pm q^{\alpha})^n\frac{(\pm q^{\alpha+1},\mp q^{\beta+1};q)_n
}{(q,-q^{\alpha+\beta+1};q)_n}
\!\qhyp43{\!q^{-n},q^{\alpha+\beta+1+n}, q^{-m},q^{m+1}}{\pm q^{\alpha+1},
\mp q^{\beta+1},-q\!}{q,q\!}\!,
\]
which can be viewed as either an Askey--Wilson polynomial with 
degree $n$ or $m$.
Obtaining these representations through \eqref{AW},
and then replacing $q^2\mapsto q$, completes the proof.
\end{proof}
\begin{rem}
Note that the above proof could be accomplished directly 
using \eqref{Singhquad2} in \eqref{ctsqJ:def1}.
Furthermore one should observe that of all the 
${}_4\phi_3$ representations
\eqref{ctsqJ:def1}--\eqref{ctsqJ:def3e},
only \eqref{ctsqJ:def1} and \eqref{ctsqJ:def1c}
allow for the quadratic transformation \eqref{Singhquad} (or \eqref{Singhquad2}). 
However,
if one starts with \eqref{ctsqJ:def1c}
in order to prove Theorem \ref{thmTom},
one arrives at an identical result.
The fact that \eqref{ctsqJ:def1} and \eqref{ctsqJ:def1c}
are the only representations which satisfy the quadratic 
transformation \eqref{Singhquad} (or \eqref{Singhquad2})
can be seen since \eqref{ctsqJ:def2b}--\eqref{ctsqJ:def1c} 
do not contain the necessary 
$q^n$ numerator entry and \eqref{ctsqJ:def1b},
\eqref{ctsqJ:def1d} do not satisfy the conditions
given in \eqref{Singhquad2}.
\end{rem}

\noindent Now consider the $m=0,1$ special cases. 
This leads to the following result.
\begin{cor}
\label{specquad}
Let $n\in\N_0$, $q\in\CCdag$, $\alpha,\beta\in\C$. Then
\begin{eqnarray}
\label{spec1}
\hspace{-0.4cm}P_n^{(\alpha,\beta)}(\tfrac12(q^\frac12+q^{-\frac12})|q^2)&=&
\left(q^{{\alpha}}\right)^n
\frac
{(q^{{\alpha+1}},-q^{{\beta+1}};q)_n}
{( q,-q^{\alpha+\beta+1};q)_n},\\
\label{spec1b}
\hspace{-0.4cm}P_n^{(\alpha,\beta)}(-\tfrac12(q^\frac12+q^{-\frac12})|q^2)&=&
\left(- q^{\alpha}\right)^n
\frac
{( - q^{{\alpha+1}},q^{{\beta+1}};q)_n}
{( q,-q^{{\alpha+\beta+1}};q)_n},\\
\hspace{-0.4cm}P_n^{(\alpha,\beta)}(\tfrac12(q^{\alpha+\frac12}
+q^{-\alpha-\frac12}|q^2)&=&
\frac{(q^{\alpha+1},-q^{\alpha+1};q)_n} {(q,-q;q)_n},\\
\hspace{-0.4cm}P_n^{(\alpha,\beta)}(-\tfrac12(q^{\beta+\frac12}
+q^{-\beta-\frac12})|q^2)&=&
\left(-q^{\alpha-\beta}\right)^n
\frac{(q^{\beta+1},-q^{\beta+1};q)_n}
{(q,-q;q)_n}.
\end{eqnarray}
\end{cor}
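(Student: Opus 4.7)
The plan is to derive the four special values directly from already-established results in the excerpt without any fresh computation. The identities naturally split into two pairs. The first pair, \eqref{spec1} and \eqref{spec1b}, comes from the $m=0$ case of Theorem \ref{thmTom}. The remaining two come from Corollary \ref{spec}, equations \eqref{spec1or} and \eqref{spec3}, after the substitution $q\mapsto q^2$ and a single application of the classical factorization $(a;q)_n(-a;q)_n=(a^2;q^2)_n$.

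For \eqref{spec1} and \eqref{spec1b}, I would start from the intermediate identity derived in the proof of Theorem \ref{thmTom}, namely
\[
P_n^{(\alpha,\beta)}\bigl(\pm\tfrac12(q^{1/2+m}+q^{-1/2-m})\bigm|q^2\bigr)=(\pm q^{\alpha})^n\frac{(\pm q^{\alpha+1},\mp q^{\beta+1};q)_n}{(q,-q^{\alpha+\beta+1};q)_n}\qhyp43{q^{-n},q^{\alpha+\beta+1+n},q^{-m},q^{m+1}}{\pm q^{\alpha+1},\mp q^{\beta+1},-q}{q,q}.
\]
At $m=0$ the numerator parameter $q^{-m}$ equals $1$, so $(q^{-m};q)_k=0$ for every $k\ge 1$ and the ${}_4\phi_3$ collapses to its $k=0$ term, which is unity. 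The remaining prefactor is exactly the right-hand side of \eqref{spec1} with the upper signs and of \eqref{spec1b} with the lower signs. The same conclusion may be read off from representation \eqref{secondPmxm}: at $m=0$ both the degree-$m$ Askey--Wilson polynomial and the $(\cdots;q^{1/2})_m$ factor in the denominator evaluate to one by the empty-product convention, after which replacing $q\mapsto q^2$ recovers the two identities in a completely parallel way.

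For the third and fourth identities, I would substitute $q\mapsto q^2$ in \eqref{spec1or} and \eqref{spec3} of Corollary \ref{spec}, obtaining
\[
P_n^{(\alpha,\beta)}\bigl(\tfrac12(q^{\alpha+1/2}+q^{-\alpha-1/2})\bigm|q^2\bigr)=\frac{(q^{2\alpha+2};q^2)_n}{(q^2;q^2)_n},
\]
together with the analogous $(-q^{\alpha-\beta})^n(q^{2\beta+2};q^2)_n/(q^2;q^2)_n$ for the negative-argument case. The elementary identity $(a;q)_n(-a;q)_n=(a^2;q^2)_n$ then rewrites each $q^2$-shifted factorial as a product of two $q$-shifted factorials: specifically $(q^{2\alpha+2};q^2)_n=(q^{\alpha+1},-q^{\alpha+1};q)_n$ and $(q^2;q^2)_n=(q,-q;q)_n$ in the numerator and denominator (and analogously with $\alpha\mapsto\beta$), yielding the claimed right-hand sides.

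There is no substantive obstacle here; the argument is purely bookkeeping. The only care required is to track the $\pm$ signs consistently through the two branches of the first pair and to verify the exponents transform correctly under $q\mapsto q^2$ when specializing Corollary \ref{spec}.
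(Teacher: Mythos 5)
Your proposal is correct and matches the paper's route for \eqref{spec1} and \eqref{spec1b}: setting $m=0$ in the ${}_4\phi_3$ produced by the quadratic transformation \eqref{Singhquad} makes the numerator parameter $q^{-m}$ equal to $1$ so the series collapses to its constant term, which is precisely the content of the summation [Askey--Wilson, (4.23)] that the paper cites. For the third and fourth identities the paper's proof is silent, and your completion --- substituting $q\mapsto q^2$ in \eqref{spec1or} and \eqref{spec3} and factoring via $(a^2;q^2)_n=(a;q)_n(-a;q)_n$ --- is a correct and natural way to supply the missing argument.
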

\begin{proof}
For \eqref{spec1}, \eqref{spec1b} use the sum
\cite[(4.23)]{AskeyWilson85}
due to the quadratic transformation
for Askey--Wilson polynomials \eqref{Singhquad}.
\end{proof}
\begin{cor}
Let $n\in\N_0$, $q\in\CCdag$, $\alpha,\beta\in\C$. Then
one also has the following special 
value
\begin{eqnarray}
&&\hspace{-0.4cm}P_n^{(\alpha,\beta)}(\pm\tfrac12(q^\frac34+q^{-\frac34})|q)=
\left(\pm q^{\frac\alpha2}\right)^n
\frac
{(\pm q^{\frac{\alpha+1}{2}},\mp q^{\frac{\beta+1}{2}};q^\frac12)_n}
{( q^{\frac{1}{2}},-q^{\frac{\alpha+\beta+1}{2}};q^\frac12)_n}
\nonumber\\&&\hspace{5.2cm}\times
\left(1-\frac{(1-q^\frac12)(1-q^{-\frac{n}{2}})(1-q^{\frac{\alpha+\beta+n+1}{2})}}
{(1\mp q^{\frac{\alpha+1}{2}})(1\pm q^{\frac{\beta+1}{2}})}\right).
\label{spec2sv}
\end{eqnarray}
\end{cor}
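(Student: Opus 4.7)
The plan is to apply Theorem~\ref{thmTom}, specifically the second representation \eqref{secondPmxm}, to the case $m=1$, since $x_1^{\pm} = \pm\tfrac12(q^{3/4}+q^{-3/4})$. When $m=1$ the Askey--Wilson polynomial on the right-hand side of \eqref{secondPmxm} reduces to a degree-one polynomial $p_1$ at base $q^{1/2}$, whose explicit linear formula can be read off directly from \eqref{AW} by summing its terminating ${}_4\phi_3$: namely, $p_1(x;a,b,c,d|q^{1/2}) = a^{-1}\bigl[(1-ab)(1-ac)(1-ad) - (1-abcd)(1-az)(1-az^{-1})\bigr]$.

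For the parameters prescribed by \eqref{secondPmxm}, namely $a=q^{(\alpha+\beta+1)/4}$, $b=-q^{(-\alpha-\beta+1)/4}$, $c=\pm q^{(\alpha-\beta+1)/4}$, $d=\mp q^{(\beta-\alpha+1)/4}$, evaluated at $z=q^{(\alpha+\beta+1+2n)/4}$, one finds $ab=-q^{1/2}$, $ac=\pm q^{(\alpha+1)/2}$, $ad=\mp q^{(\beta+1)/2}$, $abcd=q$, and $az=q^{(\alpha+\beta+1+n)/2}$, $az^{-1}=q^{-n/2}$. Substituting into the closed form for $p_1$ and factoring the product $(1-ab)(1-ac)(1-ad) = (1+q^{1/2})(1\mp q^{(\alpha+1)/2})(1\pm q^{(\beta+1)/2})$ outside a bracket produces precisely the factor in parentheses appearing in the statement, after using the identity $(1-q)/(1+q^{1/2}) = 1-q^{1/2}$ to reshape the second term.

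Finally, feeding this back into \eqref{secondPmxm} with $m=1$, the denominator Pochhammer $(-q^{1/2}, \pm q^{(\alpha+1)/2}, \mp q^{(\beta+1)/2}; q^{1/2})_1$ cancels exactly against the factored prefactor pulled out of $p_1$, and the $m=1$ numerator $(q^{(\alpha+\beta+1)/4})^{1} = a$ cancels the $a^{-1}$ from the Askey--Wilson normalization. What remains is exactly the right-hand side of \eqref{spec2sv}. The only real obstacle is careful bookkeeping: tracking the simultaneous sign choices $\pm,\mp$ and the many quarter-integer exponents through the cancellations, but the algebra itself is straightforward once the parameter tabulation above is in hand.
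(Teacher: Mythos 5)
Your proposal is correct and matches the paper's own argument: the paper's proof explicitly notes that using Theorem~\ref{thmTom} with $m=1$ in \eqref{secondPmxm} completes the proof, which is exactly what you carry out (your parameter tabulation $ab=-q^{1/2}$, $ac=\pm q^{(\alpha+1)/2}$, $ad=\mp q^{(\beta+1)/2}$, $abcd=q$, $az=q^{(\alpha+\beta+1+n)/2}$, $az^{-1}=q^{-n/2}$ checks out, and the cancellations against the $m=1$ Pochhammer symbols work as you describe). The paper also offers an equivalent alternative via the Askey--Wilson quadratic transformation, where the resulting terminating ${}_4\phi_3$ has a $q^{-1}$ numerator parameter and truncates to two terms, but this is the same two-term computation in different form.
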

\begin{proof}
The proof follows in exactly the same way as for the 
argument $\pm\frac12(q^\frac14+q^{-\frac14})$, except instead of using
the sum \cite[(4.23)]{AskeyWilson85},
directly use the quadratic transformation
for Askey--Wilson polynomials
\cite[(4.22)]{AskeyWilson85}. Then with this transformation, the resulting
terminating ${}_4\phi_3$ has a $q^{-1}$ as one of the numerator parameters,
so the sum truncates to the first two terms
and the above specializations follow. As well, simply
using Theorem \ref{thmTom} with $m=1$ in 
\eqref{secondPmxm} completes the proof.
\end{proof}
\begin{thm}\label{dualPmn}
Let $m,n,N\in\N_0$, $q\in\CCdag$, $\alpha,\beta\in\C$, and in the 
following special values for the 
continuous $q$-Jacobi polynomials for the positive 
sign in the argument, choose $\alpha=-N-1$ and leave 
$\beta$ unrestricted and for the negative sign in 
the argument chose 
$\beta=-N-1$ and $\alpha$ unrestricted. Furthermore, 
for \eqref{qJacRn} let $n\le N$, and for 
\eqref{qJacRm} let $m\le N$. Then
\begin{eqnarray}
&&\hspace{-1.5cm}P_n^{(\alpha,\beta)}(\pm\tfrac12(q^{\frac14
+\frac{m}{2}}\!+\!q^{-\frac14-\frac{m}{2}})|q)\nonumber\\
&&\hspace{0cm}=\label{qJacRn}
(\pm q^{\frac12\alpha})^n
\frac{
(\pm q^{\frac{\alpha+1}{2}},\mp q^{\frac{\beta+1}{2}};q)_n
}{(q^\frac12,-q^{\frac{\alpha+\beta+1}{2}};q)_n}
R_n(q^{-\frac{m}{2}}+q^{\frac{m+1}{2}};\pm q^{\frac12\alpha},
\pm q^{\frac12\beta},-1,-1|
q^\frac12)\\ &&\hspace{0cm}=\label{qJacRm}
(\pm q^{\frac12\alpha})^n
\frac{
(\pm q^{\frac{\alpha+1}{2}},\mp q^{\frac{\beta+1}{2}};q)_n}
{(q^\frac12,-q^{\frac{\alpha+\beta+1}{2}};q)_n}R_m(q^{-\frac{n}
{2}}+q^{\frac{\alpha+\beta+n+1}{2}};-1,-1,\pm q^{\frac12\alpha},
\pm q^{\frac12\beta}|q).
\end{eqnarray}
\end{thm}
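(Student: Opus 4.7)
The plan is to build on the intermediate identity obtained in the proof of Theorem \ref{thmTom}, where Singh's quadratic transformation \eqref{Singhquad} applied to \eqref{ctsqJ:def1} yields
\[
P_n^{(\alpha,\beta)}(\pm\tfrac12(q^{\frac12+m}+q^{-\frac12-m})|q^2)=(\pm q^{\alpha})^n\frac{(\pm q^{\alpha+1},\mp q^{\beta+1};q)_n}{(q,-q^{\alpha+\beta+1};q)_n}\,\qhyp43{q^{-n},q^{\alpha+\beta+1+n},q^{-m},q^{m+1}}{\pm q^{\alpha+1},\mp q^{\beta+1},-q}{q,q}.
\]
The first step is to recognise the balanced ${}_4\phi_3$ on the right as a $q$-Racah polynomial via \eqref{qRdefn}: matching the numerator entries forces $\bar\alpha\bar\beta=q^{\alpha+\beta}$ and $\bar\gamma\bar\delta=1$, while the denominator entries force $\bar\alpha=\pm q^{\alpha}$ and $\bar\gamma=-1$, and consequently $\bar\delta=-1$, $\bar\beta=\pm q^{\beta}$. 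The series therefore equals $R_n(q^{-m}+q^{m+1};\pm q^{\alpha},\pm q^{\beta},-1,-1|q)$, and substituting $q^2\mapsto q$ to restore the base of the continuous $q$-Jacobi polynomial produces exactly \eqref{qJacRn}.

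To guarantee that the right-hand side is a bona fide $q$-Racah polynomial, I would verify the admissibility condition \eqref{qRcond}. In the positive-sign branch one has $\bar\alpha=+q^{\alpha/2}$ (after the base rescaling), so the hypothesis $\alpha=-N-1$ gives $\bar\alpha=q^{-(N+1)/2}$, activating the first alternative of \eqref{qRcond}. In the negative-sign branch $\bar\alpha=-q^{\alpha/2}$ is not of the required form, but $\bar\beta\bar\delta=+q^{\beta/2}$, so the hypothesis $\beta=-N-1$ activates the second alternative of \eqref{qRcond}. In both cases the restriction $n\le N$ is the natural admissibility range for $R_n$ on the corresponding $q$-quadratic grid.

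Finally, to deduce \eqref{qJacRm} I would apply the $q$-Racah duality \eqref{eq:duality}, which swaps $(\bar\alpha,\bar\beta)\leftrightarrow(\bar\gamma,\bar\delta)$ and $n\leftrightarrow m$. This converts $R_n(q^{-m/2}+q^{(m+1)/2};\pm q^{\alpha/2},\pm q^{\beta/2},-1,-1|q^{1/2})$ into $R_m$ evaluated at $\mu(n;\pm q^{\alpha/2},\pm q^{\beta/2}|q^{1/2})=q^{-n/2}+q^{(\alpha+\beta+n+1)/2}$, exactly matching the argument on the right-hand side of \eqref{qJacRm}, while leaving the prefactor intact. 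After duality the truncation comes from one of the new $\bar\gamma'$, $\bar\delta'$ (the old $\bar\alpha$ or $\bar\beta\bar\delta$), so the admissibility restriction shifts to $m\le N$, as stated. The main obstacle is the careful bookkeeping of signs and of the base change $q^2\mapsto q$: one must verify that the two sign branches pick out two different alternatives of \eqref{qRcond}, and that the prefactor computed in base $q^2$ rescales consistently into the one appearing in \eqref{qJacRn}.
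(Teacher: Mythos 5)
Your proposal is correct and follows essentially the same route as the paper: both start from the post-Singh balanced ${}_4\phi_3$, identify it as a $q$-Racah polynomial with $\{\bar{\alpha},\bar{\beta},\bar{\gamma},\bar{\delta}\}=\{\pm q^{\alpha},\pm q^{\beta},-1,-1\}$, use \eqref{qRcond} to force $\alpha=-N-1$ (positive branch) or $\beta=-N-1$ (negative branch), and map $q^2\mapsto q$. The only cosmetic difference is that you pass to \eqref{qJacRm} via the duality relation \eqref{eq:duality}, whereas the paper re-reads the same ${}_4\phi_3$ directly as a degree-$m$ $q$-Racah polynomial with the parameters cyclically swapped --- an identical computation.
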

\begin{proof}
Start with expression 
\eqref{qJacspec43}, and the definition
of the $q$-Racah polynomials \eqref{qRdefn}.
We choose
$\{\bar{\alpha},\bar{\beta},\bar{\gamma},
\bar{\delta}\}=\{\pm q^\alpha,\pm q^\beta,-1,-1\},$
and therefore $\mu(m)=q^{m+1}+q^{-m}$.
Since $\bar{\delta}=\bar{\gamma}=-1$, the third 
condition in \eqref{qRcond} is impossible, so we must use either 
one of the first two conditions. 
The only solution is for the positive argument to choose 
$\alpha=-N-1$ and leave $\beta$ 
unrestricted and for the negative argument to chose 
$\beta=-N-1$ and to leave $\alpha$ unrestricted. 
Next, consider \eqref{qJacRm}. 
We choose
$\{\bar{\alpha},\bar{\beta},\bar{\gamma},
\bar{\delta}\}
=\{-1,-1,\pm q^\alpha,\pm q^\beta\}$,
and therefore $\mu(n)=q^{\alpha+\beta+n+1}+q^{-n}$.

Since $\bar{\alpha}=\bar{\beta}=-1$, the first
condition in \eqref{qRcond} is impossible, so we must use 
either one of the second or third conditions. Again, the only solution 
is for the positive argument to choose $\alpha=-N-1$ 
and leave $\beta$ unrestricted and for the negative argument 
to chose $\beta=-N-1$ and to leave $\alpha$ unrestricted. 
Since these results are for
$P_n^{(\alpha,\beta)}(x|q^2)$, mapping $q^2\mapsto q$, completes the proof.
\end{proof}
\begin{cor}
If $\beta=-\alpha$ then one has the following duality relations 
for the specialized continuous $q$-Jacobi polynomials 
\begin{equation}
P_n^{(\alpha,-\alpha)}(\pm\tfrac12 (q^{\frac14+\frac{m}{2}}
+q^{-\frac14-\frac{m}{2}})|q),
\end{equation}
namely
\begin{equation}
R_n(q^{-\frac{m}{2}}+q^{\frac{m+1}{2}};\pm q^{\frac12\alpha},
\pm q^{-\frac12\alpha},-1,-1|q)
=
R_m(q^{-\frac{n}{2}}+q^{\frac{n+1}{2}};\pm q^{\frac12\alpha},
\pm q^{-\frac12\alpha},-1,-1|q).
\end{equation}
\end{cor}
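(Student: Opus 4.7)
The plan is to specialize Theorem \ref{dualPmn} to $\beta=-\alpha$ and equate the two $q$-Racah polynomial representations given by \eqref{qJacRn} and \eqref{qJacRm}, then invoke the $q$-Racah duality relation \eqref{eq:duality} to obtain the symmetric form in the statement.

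First I would observe that both \eqref{qJacRn} and \eqref{qJacRm} express the same continuous $q$-Jacobi polynomial $P_n^{(\alpha,-\alpha)}(\pm\tfrac12(q^{\frac14+\frac{m}{2}}+q^{-\frac14-\frac{m}{2}})|q)$ and carry the identical prefactor $(\pm q^{\alpha/2})^n(\pm q^{(\alpha+1)/2},\mp q^{(-\alpha+1)/2};q)_n/(q^{1/2},-q^{1/2};q)_n$. Canceling this prefactor immediately gives an equality between an $R_n$ and an $R_m$. Moreover, under the specialization $\beta=-\alpha$, the argument of $R_m$ simplifies from $q^{-n/2}+q^{(\alpha+\beta+n+1)/2}$ to $q^{-n/2}+q^{(n+1)/2}$, which matches the structure of the $R_n$ argument (with $m\leftrightarrow n$) because $(\pm q^{\alpha/2})(\pm q^{-\alpha/2})=1$ is precisely the condition that makes $\mu(n)=q^{-n}+q^{n+1}\bar{\gamma}\bar{\delta}$ take the required form. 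This yields
\[
R_n(q^{-m/2}+q^{(m+1)/2};\pm q^{\alpha/2},\pm q^{-\alpha/2},-1,-1|q)=R_m(q^{-n/2}+q^{(n+1)/2};-1,-1,\pm q^{\alpha/2},\pm q^{-\alpha/2}|q).
\]

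The final step is to apply the duality relation \eqref{eq:duality} to the right-hand side with $\{\bar\alpha,\bar\beta,\bar\gamma,\bar\delta\}=\{-1,-1,\pm q^{\alpha/2},\pm q^{-\alpha/2}\}$. Since $\mu(n)=q^{-n}+q^{n+1}$ and $\mu(m)=q^{-m}+q^{m+1}$ for these parameter choices, \eqref{eq:duality} rewrites the right-hand $R_m$ as an $R_n$ with the parameter pairs $\{-1,-1\}$ and $\{\pm q^{\alpha/2},\pm q^{-\alpha/2}\}$ swapped, producing exactly the claimed identity.

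The main obstacle is bookkeeping rather than conceptual: one must verify that the admissibility restriction of Theorem \ref{dualPmn} (namely $\alpha=-N-1$ for the positive sign and $\beta=-N-1$ for the negative sign) remains consistent when $\beta=-\alpha$ is imposed, and one must keep track of the base ($q$ versus $q^{1/2}$) arising from the $q^2\mapsto q$ relabeling in the proof of Theorem \ref{dualPmn} so that both sides of the final duality are expressed relative to the same base. Once these bookkeeping points are settled, the corollary follows in two lines.
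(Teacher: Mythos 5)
Your overall strategy is the paper's own: the printed proof is literally ``Replace $\alpha+\beta=0$ in Theorem \ref{dualPmn}'', and your first steps (cancel the common prefactor of \eqref{qJacRn} and \eqref{qJacRm}, and note that the $R_m$ argument collapses to $q^{-n/2}+q^{(n+1)/2}$ when $\alpha+\beta=0$) carry that out correctly. Your bookkeeping concerns are also well placed --- the base mismatch ($q^{1/2}$ in \eqref{qJacRn} versus $q$ in \eqref{qJacRm} and in the corollary) is real and must be reconciled.

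However, your final step is circular as written. After equating the two representations you have
\[
R_n(\mu(m);A,B,-1,-1)=R_m(\mu(n);-1,-1,A,B),\qquad A=\pm q^{\alpha/2},\quad B=\pm q^{-\alpha/2},
\]
and this equality is nothing but the duality relation \eqref{eq:duality} itself for these parameters. Applying \eqref{eq:duality} once more to the right-hand side therefore returns $R_n(\mu(m);A,B,-1,-1)$ --- the left-hand side --- not the corollary's right-hand side $R_m(\mu(n);A,B,-1,-1)$, so the chain closes back on itself. The missing ingredient is the observation that because $AB=1$ and $(-1)(-1)=1$, the defining series \eqref{qRdefn} has numerator entries $q^{-n},q^{n+1},q^{-m},q^{m+1}$ and denominator entries $qA,-qB,-q$ in \emph{either} ordering of the parameter pairs; hence $R_m(\mu(n);-1,-1,A,B)=R_m(\mu(n);A,B,-1,-1)$, or equivalently the ${}_4\phi_3$ is manifestly symmetric under $m\leftrightarrow n$ for these parameters. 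Adding that one line closes the argument; without it the proof does not reach the stated identity.
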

\begin{proof}
Replace $\alpha+\beta=0$ in Theorem \ref{dualPmn} completes
the proof.
\end{proof}
\section{The Poisson kernel for continuous $q$-Jacobi polynomials}
In general, one may substitute special values
of an orthogonal polynomial into an identity for these
polynomials to obtain new specialized identities.
We will now focus on one such application. We now
utilize the special values which we 
derived in Section \ref{SpecctsqJac} within
the context of the Poisson kernel
for continuous $q$-Jacobi polynomials. This utilization
of special values for basic hypergeometric orthogonal
polynomials is both an application to the theory of 
basic hypergeometric orthogonal polynomials and 
nonterminating basic hypergeometric functions. 
The Poisson kernel for an orthogonal polynomial sequence 
is a bilinear generating function
and is a function of the two variables, 
$x=\frac12(z+z^{-1})$ and $y=\frac12(w+w^{-1})$ (as well 
as a power series parameter $|t|<1$ and other 
parameters involved). 
Inserting special values $z$, $w$ (which correspond 
to $x,y$) in a Poisson kernel 
results in the conversion of the bilinear generating 
function.
As we will see below, replacing either $z$ or $w$ in 
the Poisson kernel converts it to a generating function.
Furthermore, replacing both $z$ and $w$ in the Poisson 
kernel converts it to a transformation formula for an 
arbitrary argument (perhaps subject to certain constraints) 
nonterminating basic hypergeometric function.

\medskip

The most general Poisson kernel for Askey--Wilson polynomials
${\sf K}_{\,t}(x,y):={\sf K}_{\,t}(x,y;{\bf a}|q)$, 
which is given by
\begin{eqnarray}
&&\hspace{-0.45cm}{\sf K}_{\,t}(x,y)=\sum_{n=0}^\infty
\frac{(\frac{abcd}{q},\pm\sqrt{qabcd};q)_n\,p_n(x;{\bf a}|q)
p_n(y;{\bf a}|q)\,t^n}{(q,\pm\sqrt{\frac{abcd}{q}},ab,ac,ad,bc,bd,cd;q)_n}\\
&&\hspace{1cm}=\sum_{n=0}^\infty
\frac{(\frac{abcd}{q},\pm\sqrt{qabcd},ab,ac,ad;q)_nt^n}
{(q,\pm\sqrt{\frac{abcd}{q}},bc,bd,cd;q)_na^{2n}}
r_n(x;{\bf a}|q)
r_n(y;{\bf a}|q),
\end{eqnarray}
where we (and as well Gasper \& Rahman (1986)
\cite{GaspRah86}) have also used the normalized version of the 
Askey--Wilson polynomials 
\eqref{RAW}.
For the special case of the Askey--Wilson polynomials 
where $ad=bc$, 
then the Poisson kernel takes a more simplified form,
$K_{t}(x,y):={\sf K}_{\,t}(x,y;\{a,b,c,\frac{bc}{a}\}|q)$
namely
\begin{eqnarray}
&&K_{t}(x,y):=\sum_{n=0}^\infty
\frac{(\frac{b^2c^2}{q},\pm q^\frac12bc,ab,ac;q)_nt^n}{(q,\pm q^{-\frac12}bc,
\frac{b^2c}{a},\frac{bc^2}{a};q)_na^{2n}}
r_n(x;{\bf a}|q)
r_n(y;{\bf a}|q).
\end{eqnarray}
Gasper \& Rahman \cite[(6.13)]{GaspRah86} 
proved a very useful
form of this Poisson kernel for Askey--Wilson polynomials with parameters $(a,b,c,d)$ with the extra constraint $ad=bc$. 
This Poisson kernel is given 
in three terms,
each term given
as an infinite sum over a very-well poised and balanced 
${}_{10}W_9$.
One can see that for the continuous $q$-Jacobi 
polynomials \eqref{inter:AWqJac} 
with the particular choice of 
$\{a,b,c,d\}$ in \eqref{abcd}, the condition $ad=bc$
is satisfied.
Then the Poisson kernel for continuous
$q$-Jacobi polynomials is given by \cite[(2.10)]{GaspRah86}
\begin{equation}
K_t(x,y):=K_t^{(\alpha,\beta)}(x,y|q)=\sum_{n=0}^\infty \frac{(
q,q^{\alpha+\beta+1},
 q^{\frac{\alpha+\beta+3}{2}}
;q)_nt^n}
{(
q^{\alpha+1},q^{\beta+1},
 q^{\frac{\alpha+\beta+1}{2}}
;q)_nq^{(\alpha+\tfrac12)n}}P_n^{(\alpha,\beta)}(x|q)P_n^{(\alpha,\beta)}(y|q).
\label{PctsqJab}
\end{equation}
Replacing $\{a,b,c,d\}$ as in 
\eqref{abcd} produces the following form
for the Poisson kernel for continuous 
$q$-Jacobi polynomials.

{
\begin{thm} Let $q\in\CCdag$, $\alpha,\beta\in\CC$, 
$|t|<1$. Then the symmetric Poisson kernel for continuous $q$-Jacobi
polynomials is given by
\begin{eqnarray}
&&\hspace{-0.2cm}K_t(x,y)
\!=\!(1\!-\!t^2)\frac{(-q^{\frac{\alpha+\beta+4}{2}}t;q)_\infty}{(-q^{\frac{-\alpha-\beta-2}{2}}t;q)_\infty}\sum_{n=0}^\infty
\frac{(
q^{\frac{\alpha+\beta+2}{2}},
\pm q^{\frac{\alpha+\beta+3}{2}},
-q^{\frac12\beta\!+\!\frac34}z^\pm,-q^{\frac12\beta\!+\!\frac34}w^\pm;q)_nq^n}
{(q,q^{\beta+1},-q^{\frac12(\alpha+\beta+\topt{\scriptsize 2}{\scriptsize 3})},
-q^{\frac{\beta-\alpha+1}{2}},-q^{\frac{\alpha+\beta}{2}+2}t^{\pm};q)_n}\nonumber\\
&&\hspace{3.3cm}\times\Whyp{10}{9}{-q^{\frac{\alpha-\beta-2n-1}{2}}}
{q^{-n},-q^{\frac{-\alpha-\beta-2n-1}{2}},q^{-\beta-n},q^{\frac12\alpha+\frac14}
z^\pm,
q^{\frac12\alpha+\frac14}
w^\pm}
{q,q}\nonumber\\
&&\hspace{0.8cm}+\frac{(
q^{\alpha+\beta+2},t,
-q^{\frac{\alpha-\beta}{2}}t,
q^{\frac12\alpha+\frac34}z^\pm,
q^{\frac12\alpha+\frac14}w^\pm,
-q^{\frac12\beta+\frac14}tz^\pm,
-q^{\frac12\beta+\frac34}tw^\pm
;q)_\infty}
{(
q^{\alpha+1},
-q^{\frac{\alpha+\beta}{2}+\tops{1/2}{1}{3/2}},
-q^{\frac{\alpha-\beta}{2}},
q^{\beta+1}t,-q^{\frac{\alpha+\beta+2}{2}}t^{-1},tz^\pm w^\pm 
;q)_\infty}\nonumber\\
&&\hspace{2.0cm}\times\sum_{n=0}^\infty
\frac{(-t,\pm\sqrt{q}t,q^{\beta+1}t,tz^\pm w^\pm;q)_nq^n}
{(q,-q^{\frac{-\alpha-\beta}{2}}t,qt^2,-q^{\frac{\alpha-\beta}{2}}t,-q^{\frac12\beta+\frac14}tz^\pm,
-q^{\frac12\beta+\frac34}tw^\pm;q)_n}\nonumber\\
&&\hspace{3.3cm}\times\Whyp{10}{9}
{q^{\beta+n}t}
{q^nt,
-q^{\frac{\alpha+\beta+2n}{2}}t,
-q^{\frac{\beta-\alpha+2n}{2}}t,
-q^{\frac12\beta+\frac34}z^\pm,
-q^{\frac12\beta+\frac14}w^\pm
}{q,q}\nonumber\\
&&\hspace{0.8cm}+
\frac{(q^{\alpha+\beta+2},
t,-q^{\frac{\beta-\alpha}{2}}t,
-q^{\frac12\beta+\frac34}z^\pm,
-q^{\frac12\beta+\frac14}w^\pm,
q^{\frac12{\alpha}+\frac14}tz^\pm,
q^{\frac12{\alpha}+\frac34}tw^\pm
;q)_\infty}
{(
q^{\beta+1},
-q^{\frac{\alpha+\beta}{2}+\tops{1/2}{1}{3/2}},
-q^{\frac{\beta-\alpha}{2}},
q^{\alpha+1}t,
-q^{\frac12(\alpha+\beta+2)}t^{-1},
tz^\pm w^\pm
;q)_\infty}
\nonumber\\
&&\hspace{2.0cm}\times\sum_{n=0}^\infty
\frac{(-t,
\pm \sqrt{q}t,
q^{\alpha+1}
t, tz^\pm w^\pm 
;q)_nq^n}
{(q,-q^{\frac{-\alpha-\beta}{2}}t,
-q^{\frac{\beta-\alpha}{2}}t,qt^2,
q^{\frac12\alpha+\frac14}tz^\pm,
q^{\frac12\alpha+\frac34}tw^\pm;q)_n}\nonumber\\
&&\hspace{3.3cm}\times\Whyp{10}{9}{q^{\alpha+n}t}
{q^nt,-q^{\frac{\alpha+\beta+2n}{2}},
-q^{\frac{\alpha-\beta+2n}{2}}t,
q^{\frac12\alpha+\frac34}z^\pm,
q^{\frac12\alpha+\frac14}w^\pm}{q,q}.
\label{bigthm}
\end{eqnarray}
\end{thm}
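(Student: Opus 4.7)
The plan is to specialize the Gasper--Rahman three-term decomposition of the Askey--Wilson Poisson kernel to the continuous $q$-Jacobi parameter set. Concretely, the Poisson kernel for continuous $q$-Jacobi polynomials (\ref{PctsqJab}) is exactly the Askey--Wilson kernel ${\sf K}_t(x,y;{\bf a}|q)$ with ${\bf a}$ chosen as in (\ref{abcd}). A direct computation shows that with $a=q^{\frac\alpha2+\frac14}$, $b=q^{\frac\alpha2+\frac34}$, $c=-q^{\frac\beta2+\frac14}$, $d=-q^{\frac\beta2+\frac34}$ one has $ad=bc=-q^{\frac{\alpha+\beta}{2}+1}$, so the constraint required by Gasper--Rahman~\cite[(6.13)]{GaspRah86} is satisfied and the simpler form $K_t(x,y)$ applies.

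The first step is to write down Gasper--Rahman's three-term ${}_{10}W_9$ decomposition for $K_t(x,y;a,b,c,bc/a|q)$ verbatim, without yet specializing. In that formula the three summands are distinguished by their infinite-product prefactors and by which of the three parameter pairs appear as the ``base'' of the very-well-poised ${}_{10}W_9$; the first term carries the sum over $n$ with a prefactor involving $(1-t^2)(-q^{-1}abcd\,t;q)_\infty/(-qabcd/t;q)_\infty$-type ratios and the other two terms arise by the natural $(b\leftrightarrow c)$ companion. The second step is to substitute $(a,b,c,d)$ from (\ref{abcd}) and simplify each $q$-shifted-factorial product. The key bookkeeping identities are $ab=q^{\alpha+1}$, $cd=q^{\beta+1}$, $ac=-q^{\frac{\alpha+\beta+1}{2}}$, $bd=-q^{\frac{\alpha+\beta+3}{2}}$, $ad=bc=-q^{\frac{\alpha+\beta+2}{2}}$, $abcd=q^{\alpha+\beta+2}$, and $\sqrt{qabcd}=\pm q^{\frac{\alpha+\beta+3}{2}}$, together with $az^\pm=q^{\frac12\alpha+\frac14}z^\pm$ and its three analogues. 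These substitutions convert the Gasper--Rahman prefactors and pochhammer chains line-by-line into those appearing in (\ref{bigthm}); for instance the first prefactor $(1-t^2)$ combines with $(-q^{\frac{\alpha+\beta+4}{2}}t;q)_\infty/(-q^{-\frac{\alpha+\beta+2}{2}}t;q)_\infty$ exactly.

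The third step is to identify the three ${}_{10}W_9$'s. Each is a very-well-poised series whose ``$a$-parameter'' is determined by the base of $abcd/q$ in the Gasper--Rahman template; one checks that after substitution these are $-q^{\frac{\alpha-\beta-2n-1}{2}}$, $q^{\beta+n}t$, and $q^{\alpha+n}t$ respectively, matching (\ref{bigthm}). The six upper numerator parameters of each ${}_{10}W_9$ come out as listed: four from the original AW-kernel parameters combined with $z^\pm, w^\pm$, and two from the powers of $q$ and the power-series variable $t$. Careful tracking of the $\pm$ signs inherited from $c,d<0$ yields all sign patterns in the second and third summands.

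The main obstacle is purely organizational rather than conceptual: the bookkeeping is dense because each summand involves products of some twelve or so $q$-shifted factorials whose bases must be rewritten in the compact $z^\pm$/$w^\pm$/$t^{\pm}$ notation of Definition~\ref{def:2.1}. I would check one of the three summands in full (say the middle term, whose prefactor involves $(q^{\beta+1}t;q)_\infty$) and then use the structural symmetry of the Gasper--Rahman formula under the interchange $(a,d)\leftrightarrow(b,c)$, which on the $q$-Jacobi side corresponds to the shifts $\alpha\pm\tfrac12, \beta\pm\tfrac12$, to propagate the verification to the first and third summands. Finally, one converts $p_n(x;{\bf a}|q)p_n(y;{\bf a}|q)$-content that is hidden inside the AW kernel into $P_n^{(\alpha,\beta)}$-content by means of (\ref{inter:AWqJac}); but since the ${}_{10}W_9$'s in (\ref{bigthm}) are stated in terms of $z^\pm$ and $w^\pm$ directly, this conversion is in fact already absorbed into the rewriting of $az^\pm$, $bz^\pm$, etc., completing the identification.
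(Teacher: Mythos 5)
Your proposal is correct and follows exactly the route the paper takes: the paper's entire proof consists of substituting the parameter choice \eqref{abcd} into Gasper \& Rahman's three-term ${}_{10}W_9$ decomposition \cite[(6.13)]{GaspRah86}, after noting that these parameters satisfy $ad=bc$. Your additional bookkeeping identities ($ab=q^{\alpha+1}$, $cd=q^{\beta+1}$, $ac=-q^{\frac{\alpha+\beta+1}{2}}$, $bd=-q^{\frac{\alpha+\beta+3}{2}}$, $ad=bc=-q^{\frac{\alpha+\beta+2}{2}}$, $abcd=q^{\alpha+\beta+2}$) are all verified correctly and merely make explicit what the paper leaves to the reader.
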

}
\begin{proof}
{
This is obtained by making the replacement 
$\{a,b,c,d\}\mapsto \left\{q^{\frac12\alpha+
\frac14\topt{1}{3}},-q^{\frac12\beta+
\frac14\topt{1}{3}}\right\}$, namely \eqref{abcd},
into Gasper \& Rahman \cite[(6.13)]{GaspRah86}.}
\end{proof}
{
Note that there is an evident symmetry 
under the replacement $z\mapsto z^{-1}$ and $w\mapsto w^{-1}$ in \eqref{bigthm}.
Therefore there are 8 possibilities for substitutions, namely $z,w\in\{a,b,c,d\}$.
One simple application of the 
special values given in Corollary \ref{spec}, is 
that if you apply these substitutions to one of
the continuous $q$-Jacobi polynomials
in the bilinear generating function given by 
its Poisson kernel, it is converted to a 
generating function. Another easy application
of these special values is that if you apply 
them to both of the  continuous $q$-Jacobi 
polynomials, then the Poisson kernel is converted 
to a transformation formula for an arbitrary 
argument basic hypergeometric series. 
These kind of transformations are fairly rare 
in the literature of basic hypergeometric functions. 
The appearance of the arbitrary argument in 
the transformation formula comes from the power 
series parameter $t$, which appears in the 
bilinear generating function.
}

\subsection{Generating functions that condense 
from $K_t$}

{There is no evident symmetry in 
\eqref{bigthm} with respect to $a,b,c,d$, so there are many choices
of replacing with $z$ or $w$ using the special values  $a,b,c,d$ given in Corollary \ref{spec}.
Upon experimentation, we found that for two particular choices 
one obtains some interesting generating functions
for continuous $q$-Jacobi polynomials, which also have interesting $q\to1^{-}$ limits.}
\subsubsection{\boro{The $w=a$ generating function}}
{For the choice $w=a$ we obtain the following
interesting generating function for continuous $q$-Jacobi polynomials.}

{
\begin{thm}Let $q\in\CCdag$, $x=\frac12(z+z^{-1})\in\CC$, $\alpha,\beta,t\in\CCast$ such that 
$|q^{\alpha+\frac12}t|<1$. Then
\begin{eqnarray}
&&\hspace{-0.5cm} 
\sum_{n=0}^\infty
\frac{(q^{\alpha+\beta+1},q^{\frac12(\alpha+\beta+3)};q)_n}{(q^{\beta+1},q^{\frac12(\alpha+\beta+1)};q)_n}t^n
P_n^{(\alpha,\beta)}(x|q)\nonumber\\
&&\hspace{0.5cm}=\frac{(-q^{\frac12(3\alpha+\beta+5)}t,q^{2\alpha+1}t^2,-q^{\frac12(\alpha+\beta+2)}t,q^{\alpha+\frac12\beta+\frac74}tz^\pm;q)_\infty}
{(q^{2\alpha+2}t^2,-q^{\alpha+\beta+\frac52}t,-q^{\alpha+1}t,q^{\frac12\alpha+\frac14}tz^\pm;q)_\infty}\nonumber\\
&&\hspace{1.5cm}\times\Whyp87{-q^{\alpha+\beta+\frac32}t}{-q^{\alpha+\frac32}t,q^{\frac12(\beta-\alpha)},q^{\frac12(\alpha+\beta+3)},-q^{\frac12\beta+\frac34}z^\pm}{q,-q^{\alpha+\frac12}t}.
\label{JacPois}
\end{eqnarray}
\end{thm}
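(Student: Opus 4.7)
The plan is to derive \eqref{JacPois} by specializing the Poisson kernel identity \eqref{bigthm} at $w=a=q^{\alpha/2+1/4}$ and matching this with the specialization of the defining series \eqref{PctsqJab}. Concretely, on the LHS we use the special value \eqref{spec1or} from Corollary \ref{spec}, namely $P_n^{(\alpha,\beta)}(\tfrac12(q^{\alpha/2+1/4}+q^{-\alpha/2-1/4})|q)=(q^{\alpha+1};q)_n/(q;q)_n$, substitute into \eqref{PctsqJab}, cancel the $(q^{\alpha+1};q)_n$ against the denominator factor, and absorb the $q^{-(\alpha+1/2)n}$ by the replacement $t\mapsto tq^{\alpha+1/2}$. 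This yields exactly the generating function on the LHS of \eqref{JacPois}.

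The heart of the argument is the corresponding simplification of the RHS of \eqref{bigthm} at $w=q^{\alpha/2+1/4}$. The crucial observation is that $q^{\alpha/2+1/4}w^{-1}=1$ under this substitution, which has three distinct consequences: \textbf{(i)} Term 2 carries the numerator product $(q^{\alpha/2+1/4}w^{\pm};q)_\infty$, one of whose entries is $(1;q)_\infty=0$, so Term 2 vanishes identically; \textbf{(ii)} in Term 1 the inner $_{10}W_9$ has $q^{\alpha/2+1/4}w^{\pm}=\{q^{\alpha+1/2},1\}$ among its numerator parameters, so the series collapses to the $n=0$ summand, namely $1$; and \textbf{(iii)} the same phenomenon occurs inside Term 3, which likewise reduces to $1$ for every outer summation index.

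Once the two $_{10}W_9$'s are neutralized, massive cancellation occurs in the surviving outer sums of Terms 1 and 3. In Term 1 the $w$-parameters $-q^{\beta/2+3/4}w^{\pm}$ simplify to $\{-q^{(\alpha+\beta+2)/2},-q^{(\beta-\alpha+1)/2}\}$, which precisely match denominator factors; in Term 3 the analogous pairs $-q^{\beta/2+1/4}w^{-1}=-q^{(\beta-\alpha)/2}$ and $q^{\alpha/2+3/4}tw=q^{\alpha+1}t$ cancel with their counterparts. After this bookkeeping one is left with two relatively short nonterminating $_{r}\phi_{r-1}$-type sums, each multiplied by an infinite-product prefactor, and their sum is what must be shown to equal the RHS of \eqref{JacPois}.

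The main obstacle is then Step 4: recombining the two surviving infinite series into the single very-well-poised $_8W_7$ appearing on the RHS of \eqref{JacPois}, with the stated infinite-product prefactor in front. The natural tool is a Bailey-type three-term transformation that writes a sum of two balanced/nearly-poised $_4\phi_3$'s (with one shift in the parameters) as a single $_8W_7$ times a ratio of infinite products (see Gasper--Rahman \cite[Chapter 2]{GaspRah}). One must carefully align parameters and verify that the prefactor ratios collapse to $(-q^{(3\alpha+\beta+5)/2}t,q^{2\alpha+1}t^2,-q^{(\alpha+\beta+2)/2}t,q^{\alpha+\beta/2+7/4}tz^{\pm};q)_\infty/(q^{2\alpha+2}t^2,-q^{\alpha+\beta+5/2}t,-q^{\alpha+1}t,q^{\alpha/2+1/4}tz^{\pm};q)_\infty$. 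As a sanity check, or as an alternative route if the combined-series identity proves elusive, one can extract the coefficient of $t^n$ on both sides: the LHS coefficient is given directly by the ${}_4\phi_3$ representation \eqref{ctsqJ:def1c} of $P_n^{(\alpha,\beta)}(x|q)$, while the RHS coefficient can be computed by expanding the infinite products and the $_8W_7$ in $t$, reducing the identity to a (presumably known) balanced ${}_4\phi_3$ summation.
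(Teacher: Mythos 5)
Your proposal is correct and follows exactly the paper's route: the paper's entire proof is ``start with \eqref{bigthm} and replace $w=a$ using Corollary \ref{spec}, and the result follows,'' and you have simply supplied the bookkeeping it omits (Term 2 killed by $(1;q)_\infty=0$, both ${}_{10}W_9$'s collapsing via the unit parameter $q^{\frac12\alpha+\frac14}w^{-1}=1$, and the two surviving balanced ${}_4\phi_3$'s recombining into the ${}_8W_7$ via Bailey's three-term transformation). No discrepancy with the paper's argument.
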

}
{
\begin{proof}
Start with \eqref{bigthm} and replacing $w=a$ in Corollary \ref{spec} and the result follows.
\end{proof}
}

{
This generating function has a $q\to 1^{-}$ limit given as follows.
\begin{thm}
Let $x,\alpha,\beta\in\CC$, $|t|<1$. Then one has the following generating functions for Jacobi polynomials, namely
\begin{eqnarray}
&&\hspace{-0.5cm} 
\sum_{n=0}^\infty
\frac{(\alpha+\beta+1)_n(\frac12(\alpha+\beta+3))_n}
{(\beta+1)_n(\frac12(\alpha+\beta+1))_n}t^n
P_n^{(\alpha,\beta)}(x)\nonumber\\
&&\hspace{0.5cm}=
\frac{1-t^2}{(1+t^2-2tx)^{\frac12(\alpha+\beta+3)}}
\hyp21{\frac12(\beta-\alpha),\frac12(\alpha+\beta+3)}{\beta+1}{\frac{-2t(x+1)}{1+t^2-2tx}}\nonumber\\
&&\hspace{0.5cm}=
\frac{2^{\frac12\beta}(1-t)\Gamma(\beta+1)}
{(t(1+x))^{\frac12\beta}(1+t^2-2tx)^{\frac12\alpha+1}}
P_{\alpha+1}^{-\beta}\left(\frac{1+t}{\sqrt{1+t^2-2tx}}\right),
\end{eqnarray}
where $P_\nu^\mu$ is an associated Legendre function of the first kind
\cite[(14.3.6)]{NIST:DLMF}.
\end{thm}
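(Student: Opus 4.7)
The plan is to take the $q\to 1^{-}$ limit of the $q$-identity \eqref{JacPois} term by term, using standard limit formulas for the $q$-shifted factorials, and then to recognize the resulting classical ${}_2F_1$ as an associated Legendre function via a quadratic transformation. Throughout I will use the well-known limits $\lim_{q\to 1^-}(q^A;q)_n/(1-q)^n=(A)_n$ and $\lim_{q\to 1^-}P_n^{(\alpha,\beta)}(x|q)=P_n^{(\alpha,\beta)}(x)$ (see \cite[Section~14.10]{Koekoeketal}), which together convert the left-hand side of \eqref{JacPois} into the claimed classical generating function.

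For the right-hand side prefactor, the plan is to group the infinite $q$-products into ratios of the form $(q^c u;q)_\infty / (u;q)_\infty$, whose limit is $(1-\bar u)^{-c}$ with $\bar u=\lim u$. Explicitly, I would pair $(-q^{(3\alpha+\beta+5)/2}t;q)_\infty$ with $(-q^{\alpha+\beta+5/2}t;q)_\infty$, $(-q^{(\alpha+\beta+2)/2}t;q)_\infty$ with $(-q^{\alpha+1}t;q)_\infty$, $(q^{2\alpha+1}t^2;q)_\infty$ with $(q^{2\alpha+2}t^2;q)_\infty$, and the four $z$-dependent factors with their denominator counterparts. The first two pairs produce mutually cancelling powers of $(1+t)$, the third pair contributes $1-t^2$, and the $z$-pairs collapse via $(1-tz)(1-tz^{-1})=1-2tx+t^2$ to give $(1-2tx+t^2)^{-(\alpha+\beta+3)/2}$.

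The most delicate step is the $q\to 1^-$ limit of the ${}_8W_7$ series. Writing it in well-poised form as
\[
\sum_{n=0}^\infty \frac{1-aq^{2n}}{1-a}\frac{(a;q)_n}{(q;q)_n}\prod_{i=1}^5 \frac{(b_i;q)_n}{(aq/b_i;q)_n}(-q^{\alpha+1/2}t)^n,
\]
with $a=-q^{\alpha+\beta+3/2}t$ and the five $b_i$ read off from \eqref{JacPois}, I would separate the factors into those whose base is $q^B$ for fixed $B$ (each contributing a Pochhammer symbol via $(q^B;q)_n/(1-q)^n\to(B)_n$) and those tending to a finite nonzero value $\bar u\neq 1$ (each contributing $(1-\bar u)^n$). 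A direct count shows two ``$q^B$-type'' numerator factors ($b_2,b_3$) and two denominator factors ($(q;q)_n, aq/b_1$), so the spurious $(1-q)^n$ scalings match. The degenerate factors $(a;q)_n, (b_1;q)_n, (aq/b_2;q)_n, (aq/b_3;q)_n$ all limit to $(1+t)^n$ and therefore cancel in pairs; the $z$-dependent ones combine via $(1+z)(1+z^{-1})/((1-tz)(1-tz^{-1}))=2(1+x)/(1-2tx+t^2)$, and the prefactor $(1-aq^{2n})/(1-a)\to 1$. Combined with the argument $(-t)^n$, this produces precisely the ${}_2F_1\!\left(\tfrac{\beta-\alpha}{2},\tfrac{\alpha+\beta+3}{2};\beta+1;\tfrac{-2t(1+x)}{1-2tx+t^2}\right)$ in the first equality.

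For the second equality, I would set $X:=(1+t)/\sqrt{1-2tx+t^2}$, observe that $X^2-1=2t(1+x)/(1-2tx+t^2)$, and apply a Pfaff transformation followed by the quadratic transformation \cite[(15.8.13)]{NIST:DLMF} to recast our ${}_2F_1$ (with argument $1-X^2$) into a ${}_2F_1$ with argument $(1-X)/2$ and Legendre-type parameters $-\alpha-1,\alpha+2,\beta+1$. A direct comparison with \cite[(14.3.6)]{NIST:DLMF} then yields $P_{\alpha+1}^{-\beta}(X)$ with the claimed algebraic prefactor. The main obstacle I expect is Step~3: verifying that the $(1-q)^n$ scaling balances exactly and that the four factors limiting to $(1+t)^n$ cancel as stated, rather than simply confirming the algebra in Steps 1, 2, and 4.
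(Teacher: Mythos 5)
Your proposal is correct and follows essentially the same route as the paper, whose proof consists of the single remark that the result is obtained from \eqref{JacPois} (the $w=a$ specialization of the Poisson kernel) by taking the $q\to1^-$ limit and simplifying. You have simply supplied the details the paper omits — the pairing of the infinite $q$-products, the balanced count of $(1-q)^n$ factors in the ${}_8W_7$, and the hypergeometric-to-Legendre conversion — and your bookkeeping (the cancelling $(1+t)^n$ factors, the collapse $(1+z)(1+z^{-1})/((1-tz)(1-tz^{-1}))=2(1+x)/(1+t^2-2tx)$, and the argument $1-X^2=4z(1-z)$ with $z=\tfrac12(1-X)$) checks out.
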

}
{
\begin{proof}
This generating function is obtained by
setting $w=a=q^{\frac12\alpha+\frac14}$ in
\eqref{JacPois} and then simplifying the resulting expression.
\end{proof}}

{As a special case, this generating function leads to the following generating function for continuous $q$-ultraspherical/Rogers polynomials.}
{
\begin{cor}
\label{firstRogersgf}
Let $q\in\CCdag$, $x=\frac12(z+z^{-1})\in\CC$, $\beta,t\in\CCast$ such that 
$|t|<1$. Then
\begin{equation}
\sum_{n=0}^\infty \frac{(q\beta;q)_n}{(\beta;q)_n}t^n C_n(x;\beta|q)=
(1\!-\!\beta t^2)
\frac{(
q\beta t z^\pm;q)_\infty}{
(
t z^\pm;q)_\infty}
\end{equation}
\end{cor}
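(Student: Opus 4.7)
The plan is to obtain Corollary \ref{firstRogersgf} as the specialization $\alpha=\beta$ of the generating function \eqref{JacPois}, combined with the standard reduction of the continuous $q$-Jacobi polynomial $P_n^{(\alpha,\alpha)}(x|q)$ to the continuous $q$-ultraspherical (Rogers) polynomial $C_n(x;q^{\alpha+\frac12}|q)$. (Note that $\beta$ denotes a Jacobi parameter in the theorem but a Rogers parameter in the corollary, so a renaming will appear at the end.)

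First I would set $\alpha=\beta$ in \eqref{JacPois}. Under this specialization the numerator parameter $q^{\frac12(\beta-\alpha)}$ of the very-well-poised ${}_8W_7$ becomes $1$, so a factor $(1;q)_n$ appears in its defining series and forces the sum to truncate at $n=0$ with value unity. This immediately eliminates the ${}_8W_7$ factor.

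Next I would simplify the remaining infinite $q$-Pochhammer ratio on the right-hand side. When $\alpha=\beta$ the factors $(-q^{\frac12(3\alpha+\beta+5)}t;q)_\infty$ and $(-q^{\alpha+\beta+\frac52}t;q)_\infty$ coincide and cancel, and so do the two $(-q^{\alpha+1}t;q)_\infty$ factors. What survives is
\[
(1-q^{2\alpha+1}t^2)\,\frac{(q^{\frac{3\alpha}{2}+\frac74}tz^\pm;q)_\infty}{(q^{\frac\alpha2+\frac14}tz^\pm;q)_\infty}.
\]

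Third I would invoke the identification of $P_n^{(\alpha,\alpha)}(x|q)$ with a constant multiple of $C_n(x;q^{\alpha+\frac12}|q)$, which is visible from \eqref{inter:AWqJac}: at $\alpha=\beta$ the Askey--Wilson parameter set in \eqref{abcd} becomes $\{\pm q^{\alpha/2+1/4},\pm q^{\alpha/2+3/4}\}$, the Askey--Wilson realization of the continuous $q$-ultraspherical polynomials with parameter $q^{\alpha+\frac12}$. After combining this Jacobi-to-Rogers normalization constant with the prefactor $(q^{2\alpha+1},q^{\alpha+\frac32};q)_n/(q^{\alpha+1},q^{\alpha+\frac12};q)_n$ that comes from setting $\alpha=\beta$ in \eqref{JacPois}, and absorbing a factor of $q^{(\alpha/2+1/4)n}$ into the series variable via the rescaling $t\mapsto q^{\alpha/2+1/4}t$, the coefficient of $C_n(x;q^{\alpha+\frac12}|q)$ reduces to $(q\beta;q)_n/(\beta;q)_n$ with $\beta:=q^{\alpha+\frac12}$. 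Under this same rescaling the surviving product in the previous step transforms into $(1-\beta t^2)(q\beta tz^\pm;q)_\infty/(tz^\pm;q)_\infty$, matching the corollary exactly.

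The principal obstacle is the constant bookkeeping in the third step: the Rogers-to-Jacobi normalization must combine precisely with the prefactor from \eqref{JacPois} to produce the clean ratio $(q\beta;q)_n/(\beta;q)_n$ \emph{and} exactly the power of $q$ required to support the rescaling of $t$ on both sides. Once this identification is verified, the remaining steps are pure algebraic simplification of $q$-Pochhammer products.
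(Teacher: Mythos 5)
Your proposal is correct and follows essentially the same route as the paper: set $\alpha=\beta$ in \eqref{JacPois} so that the parameter $q^{\frac12(\beta-\alpha)}=1$ collapses the ${}_8W_7$ to unity, cancel the coinciding infinite products, and then apply the Jacobi-to-Rogers reduction \eqref{qJactoqGeg} together with the rescaling of $t$ by $q^{\frac\alpha2+\frac14}=\sqrt{q^{\alpha+\frac12}}$. The paper's proof is just a terser statement of exactly these steps.
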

}
{
\begin{proof}
Starting with \eqref{JacPois} with $\alpha=\beta$ causes the ${}_8W_7$ to become unity. Then using \cite[p.~473]{Koekoeketal}
\begin{equation}
P_n^{(\alpha,\alpha)}(x|q)=\frac{(q^{\alpha+1};q)_n}{(q^{2\alpha+1};q)_n}
q^{(\frac{\alpha}{2}+\frac14)n}
C_n(x;q^{\alpha+\frac12}|q),
\label{qJactoqGeg}
\end{equation}
completes the proof.
\end{proof}
}
{
\begin{rem}
The $q\to 1^{-}$ Gegenbauer polynomial limit of the generating function given by Corollary \ref{firstRogersgf} is
\begin{equation}
\sum_{n=0}^\infty \frac{(1+\beta)_n}{(\beta)_n}t^n C_n^\beta(x)=\sum_{n=0}^\infty \frac{\beta+n}{\beta}t^n C_n^\beta(x)=\frac{1-t^2}{(1+t^2-2tx)^{\beta+1}}.
\end{equation}
\end{rem}
}

\subsection{\boro{The $z=d$ generating function}}
{For the choice $z=d$ we obtain the following
interesting generating function for continuous $q$-Jacobi polynomials.}

{
\begin{thm}Let $q\in\CCdag$, $x=\frac12(z+z^{-1})\in\CC$, $\alpha,\beta,t\in\CCast$ such that 
$|q^{\alpha+\frac12}t|<1$. Then
\begin{eqnarray}
&&\hspace{-0.5cm} 
\sum_{n=0}^\infty
\frac{(q^{\alpha+\beta+1},\pm q^{\frac12(\alpha+\beta+3)};q)_n}{(q^{\alpha+1},\pm q^{\frac12(\alpha+\beta+1)};q)_n}t^n
P_n^{(\alpha,\beta)}(x|q)\nonumber\\
&&\hspace{0.5cm}=\frac{(q^{\alpha+\beta+3}t,q^{\alpha+\beta+2}t^2;q)_\infty}
{(t,q^{\alpha+\beta+3}t^2;q)_\infty}\qhyp54{q^{\frac12(\alpha+\beta+2)},\pm q^{\frac12(\alpha+\beta+3)},q^{\frac12\alpha+\frac14}z^\pm}{q^{\alpha+1},-q^{\frac12(\alpha+\beta+1)},q^{\alpha+\beta+3}t,\frac{q}{t}}{q,q}\nonumber\\
&&\hspace{0.8cm}+\frac{(q^{\alpha+\beta+2},-q^{\frac12(\alpha+\beta+1)}t,
-q^{\frac12(\alpha+\beta+2)}t,q^{\alpha+1}t,q^{\frac12\alpha+\frac14}z^\pm;q)_\infty}
{(q^{\alpha+1},-q^{\frac12(\alpha+\beta+1)},-q^{\frac12(\alpha+\beta+2)},\frac{1}{t},q^{\frac12\alpha+\frac14}tz^\pm;q)_\infty}\nonumber\\
&&\hspace{4.4cm}\times\qhyp54
{q^{\frac12(\alpha+\beta+2)}t,\pm q^{\frac12(\alpha+\beta+3)}t,q^{\frac12\alpha+\frac14}tz^\pm}
{-q^{\frac12(\alpha+\beta+1)}t,qt,q^{\alpha+1}t,q^{\alpha+\beta+3}t^2}
{q,q}.
\label{gfRog2}
\end{eqnarray}
\end{thm}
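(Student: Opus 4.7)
The plan is to imitate the one-line proof of \eqref{JacPois}: I take the three-term Poisson kernel \eqref{bigthm} and substitute one of its two arguments by a special value from Corollary~\ref{spec}. Specifically, I set the second argument of $K_t(x,y)$ to $y=\tfrac12(d+d^{-1})$ where $d=-q^{\frac12\beta+\frac34}$, i.e., I put $w=d$ inside \eqref{bigthm}. By \eqref{spec4}, $P_n^{(\alpha,\beta)}(y|q)$ then takes the closed form $(-q^{(\alpha-\beta-1)/2})^n(q^{\beta+1},-q^{(\alpha+\beta+3)/2};q)_n/(q,-q^{(\alpha+\beta+1)/2};q)_n$. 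Plugging this into the defining expansion \eqref{PctsqJab} of $K_t$ makes the coefficient of $P_n^{(\alpha,\beta)}(x|q)$ telescope to
\[
\frac{(q^{\alpha+\beta+1},\pm q^{\frac12(\alpha+\beta+3)};q)_n}{(q^{\alpha+1},\pm q^{\frac12(\alpha+\beta+1)};q)_n}\,(-q^{-(\alpha+\beta+2)/2}\,t)^n,
\]
and absorbing the overall factor $(-q^{-(\alpha+\beta+2)/2})^n$ into a rescaling of the power-series variable $t$ reproduces the left-hand side of \eqref{gfRog2}.

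The substantive step is the right-hand side. Under $w=d$, the key simplifications are $-q^{\frac12\beta+\frac34}w^\pm=\{q^{\beta+\frac32},q\}$ and, crucially, $-q^{\frac12\beta+\frac14}w^\pm=\{q^{\beta+1},1\}$; the presence of a parameter equal to $1$ in the second of these is decisive. In whichever ${}_{10}W_9$ of \eqref{bigthm} this value appears as a numerator entry, the very-well-poised sum collapses to its $n=0$ term. I expect one of the three terms of \eqref{bigthm} to disappear altogether, while the other two simplify: each surviving ${}_{10}W_9$ should reduce to a balanced ${}_5\phi_4$ via cancellation of a matched very-well-poised pair of parameters, and each of the two $(\,\cdot\,;q)_\infty$-prefactors should simplify to the corresponding prefactor in \eqref{gfRog2} by elementary $q$-Pochhammer identities.

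The main obstacle I foresee is precisely this bookkeeping: correctly identifying which of the three pieces of \eqref{bigthm} drops out, verifying that the other two ${}_{10}W_9$'s really do collapse to the two ${}_5\phi_4$'s claimed in \eqref{gfRog2}, and matching all the signs and half-integer exponents $\tfrac12(\alpha+\beta+k)$ in both the parameter lists and the $(\,\cdot\,;q)_\infty$-prefactors. The steps are mechanical once the right pairings are identified; a useful consistency check at the end is the $q\to 1^{-}$ limit against a classical Jacobi-polynomial generating function, parallel to what is done after \eqref{JacPois} for the $w=a$ case.
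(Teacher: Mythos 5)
Your overall strategy --- specialize one argument of the three-term Poisson kernel \eqref{bigthm} at the special value belonging to $d=-q^{\frac12\beta+\frac34}$, evaluate the corresponding polynomial by \eqref{spec4}, and rescale $t$ --- is the paper's route, and your treatment of the left-hand side is correct: the coefficient in \eqref{PctsqJab} does telescope to $(-q^{-\frac12(\alpha+\beta+2)}t)^n$ times the stated Pochhammer ratio. The gap is on the right-hand side, and it is concrete: you substitute in the $w$-slot, but \eqref{bigthm} is \emph{not} symmetric under $z\leftrightarrow w$ (in the second and third terms the $z$-attached parameters are $-q^{\frac12\beta+\frac34}z^\pm$, $q^{\frac12\alpha+\frac34}z^\pm$, $-q^{\frac12\beta+\frac14}tz^\pm$, $q^{\frac12\alpha+\frac14}tz^\pm$, while the $w$-attached ones carry the complementary exponents), and both of your ``key simplifications'' are miscomputed. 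With $w=-q^{\frac12\beta+\frac34}$ one has $-q^{\frac12\beta+\frac34}w^{\pm}=\{q^{\beta+\frac32},\,1\}$ (not $\{q^{\beta+\frac32},q\}$) and $-q^{\frac12\beta+\frac14}w^{\pm}=\{q^{\beta+1},\,q^{-\frac12}\}$ (not $\{q^{\beta+1},1\}$). So the unit parameter you call ``decisive'' is not where you place it: the only unit parameter produced by $w=d$ sits in the outer sum of the \emph{first} term of \eqref{bigthm}, which therefore truncates to $n=0$, whereas the ${}_{10}W_9$'s of the second and third terms contain $-q^{\frac12\beta+\frac14}w^\pm$ and $q^{\frac12\alpha+\frac14}w^\pm$, none of which equals $1$. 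On your route those two ${}_{10}W_9$'s do not collapse, the third term does not vanish, and you are left with a product plus two genuine double sums --- not the two balanced ${}_5\phi_4$'s of \eqref{gfRog2}.

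The fix is the paper's choice: specialize the $z$-slot, $z=d=-q^{\frac12\beta+\frac34}$, and rename $y=\frac12(w+w^{-1})$ to $x$ at the end (equivalently, exploit the symmetry of the left-hand side, not of the right-hand side). Then $-q^{\frac12\beta+\frac34}z^{-1}=1$ does three jobs at once: it annihilates the third term of \eqref{bigthm} because its numerator prefactor acquires the factor $(1;q)_\infty=0$; it truncates the outer sum of the first term to its $n=0$ term; and it collapses the ${}_{10}W_9$ inside the second term to $1$, leaving a single balanced series whose parameters, after the rescaling $t\mapsto -q^{\frac12(\alpha+\beta+2)}t$, are precisely those of the second ${}_5\phi_4$ in \eqref{gfRog2} (for instance $-q^{-\frac12\beta-\frac34}tw^\pm\mapsto q^{\frac12\alpha+\frac14}tz^\pm$ and $qt^2\mapsto q^{\alpha+\beta+3}t^2$). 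The remaining bookkeeping --- matching the surviving infinite-product prefactors and accounting for the first ${}_5\phi_4$ of \eqref{gfRog2} --- is still delicate and is exactly where your ``mechanical once the right pairings are identified'' step lives; but as written, with the substitution in the $w$-slot and the two parameter evaluations wrong, the proposed collapse does not occur and the argument does not go through.
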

}
{
\begin{proof}
This generating function is obtained by
setting $z=d=-q^{\frac12\beta+\frac34}$ in
\eqref{JacPois}, replacing $y=\frac12(w+w^{-1})$ with $x$ and then simplifying the resulting expression completes the proof.
\end{proof}}

{
This generating function for continuous $q$-Jacobi polynomials has a $q\to 1^{-}$ limit given as follows.
\begin{thm}
Let $x,\alpha,\beta\in\CC$, $|t|<1$. Then one has the following generating function for Jacobi polynomials, namely
\begin{eqnarray}
&&\hspace{-0.5cm} 
\sum_{n=0}^\infty
\frac{(\alpha+\beta+1)_n(\frac12(\alpha+\beta+3))_n}{(\alpha+1)_n(\frac12(\alpha+\beta+1))_n}t^n
P_n^{(\alpha,\beta)}(x)\nonumber\\
&&\hspace{2cm}=\frac{1-t^2}{(1-t)^{\alpha+\beta+3}}
\hyp21{\frac12(\alpha+\beta+2),\frac12(\alpha+\beta+3)}{\alpha+1}{\frac{2t(y-1)}{(1-t)^2}}\nonumber\\
&&\hspace{2cm}=\frac{2^{\frac12\alpha}(1+t)\Gamma(\alpha+1)}{(t(y-1))^{\frac12\alpha}(1+t^2-2ty)^{\frac12\beta+1}}P_{\beta+1}^{-\alpha}\left(\frac{1-t}{\sqrt{1+t^2-2ty}}\right),
\end{eqnarray}
where $P_\nu^\mu$ is an associated Legendre function of the first kind
\cite[(14.3.6)]{NIST:DLMF}.
\end{thm}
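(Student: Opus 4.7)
The plan is to extract this generating function as the $q\to 1^-$ limit of the $q$-generating function \eqref{gfRog2}. On the left-hand side, termwise convergence is immediate: each coefficient $\frac{(q^{\alpha+\beta+1},q^{(\alpha+\beta+3)/2};q)_n}{(q^{\alpha+1},q^{(\alpha+\beta+1)/2};q)_n}$ tends to $\frac{(\alpha+\beta+1)_n(\frac12(\alpha+\beta+3))_n}{(\alpha+1)_n(\frac12(\alpha+\beta+1))_n}$ via the standard limit $(q^a;q)_n/(1-q)^n\to(a)_n$, and the continuous $q$-Jacobi polynomial $P_n^{(\alpha,\beta)}(x|q)$ tends to the classical Jacobi polynomial $P_n^{(\alpha,\beta)}(x)$.

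For the right-hand side, only the first term of \eqref{gfRog2} survives in the limit. Its prefactor is handled by the identity $\lim_{q\to1^-}(q^at;q)_\infty/(t;q)_\infty=(1-t)^{-a}$ together with the telescoping evaluation $(q^{\alpha+\beta+2}t^2;q)_\infty/(q^{\alpha+\beta+3}t^2;q)_\infty=1-q^{\alpha+\beta+2}t^2\to 1-t^2$, producing the scalar $(1-t^2)/(1-t)^{\alpha+\beta+3}$. Inside the ${}_5\phi_4$, the numerator pair $q^{\alpha/2+1/4}z^{\pm}$ contributes $(1-z)^n(1-z^{-1})^n=(-2(x-1))^n$; the sign-bearing parameters $\pm q^{(\alpha+\beta+3)/2}$ and $-q^{(\alpha+\beta+1)/2}$ yield canceling factors of $2^n$; and the denominator parameters $q^{\alpha+\beta+3}t$ and $q/t$ give $(1-t)^n$ and $-(1-t)^n/t^n$. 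Collecting these collapses the argument to $\tfrac{2t(x-1)}{(1-t)^2}$ and produces the ${}_2F_1(\tfrac12(\alpha+\beta+2),\tfrac12(\alpha+\beta+3);\alpha+1;\cdot)$, matching the first claimed form.

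The second term of \eqref{gfRog2} must vanish in the limit. I would verify this by counting powers of $(1-q)$: the ratio $(q^{\alpha+\beta+2};q)_\infty/(q^{\alpha+1};q)_\infty$ behaves asymptotically as $(1-q)^{-(\beta+1)}\Gamma(\alpha+1)/\Gamma(\alpha+\beta+2)$, while the remaining infinite-product ratios in the prefactor and the second ${}_5\phi_4$ together contribute compensating positive powers of $(1-q)$; the net positive exponent forces the whole term to zero.

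Finally, to reach the associated Legendre function form, I would apply a Kummer-type quadratic transformation of ${}_2F_1(a,a+\tfrac12;c;\cdot)$ \cite[Section 15.8]{NIST:DLMF}, using the identity $\sqrt{1-z}=\sqrt{1+t^2-2tx}/(1-t)$ for $z=\tfrac{2t(x-1)}{(1-t)^2}$ to rewrite the argument as $(1-\xi)/2$ with $\xi=(1+t)/\sqrt{1+t^2-2tx}$. Matching against the ${}_2F_1$ representation of the associated Legendre function \cite[(14.3.6)]{NIST:DLMF} with degree $\nu=\alpha+1$ and order $\mu=-\beta$ produces the Legendre expression. The main obstacle is the bookkeeping of the algebraic prefactor---the interplay of powers of $(1-t)$, $(1+t)$, $\sqrt{1+t^2-2tx}$, and the constant $2^{\beta/2}\Gamma(\beta+1)/(t(1+x))^{\beta/2}$ generated by the quadratic transformation---together with the $(1-q)$-power count confirming that the second term of \eqref{gfRog2} really is subdominant.
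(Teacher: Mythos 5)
The paper offers no written proof of this theorem beyond the sentence that it is the $q\to1^{-}$ limit of \eqref{gfRog2}, so your strategy --- termwise limit on the left, the first ${}_5\phi_4$ term condensing to the ${}_2F_1$, the second term vanishing --- is exactly the intended one, and your computation of the prefactor $(1-t^2)/(1-t)^{\alpha+\beta+3}$ and of the argument $2t(x-1)/(1-t)^2$ is correct (the $y$ appearing on the right-hand side of the printed statement is evidently a typo for $x$). Two points need repair, however. First, your justification that the second term of \eqref{gfRog2} dies is pitched at the wrong scale: the ratios $(-q^{\frac12(\alpha+\beta+1)}t;q)_\infty/(-q^{\frac12(\alpha+\beta+1)};q)_\infty$ and its companion do not contribute powers of $(1-q)$; by the $q$-binomial theorem each behaves like $\exp(-(1-t)/(1-q))$, i.e.\ decays faster than any power of $(1-q)$, and it is this super-exponential decay that must absorb both the $(1-q)^{-(\beta+1)}$ blow-up of $(q^{\alpha+\beta+2};q)_\infty/(q^{\alpha+1};q)_\infty$ and the growth of the second ${}_5\phi_4$, whose $n$th term carries $(q;q)_n^{-1}\sim(1-q)^{-n}/n!$ and therefore also grows like $\exp(C/(1-q))$. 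A pure count of $(1-q)$-powers, as you propose, will not close this step.

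Second, in the Legendre reduction you have imported the data of the companion ($w=a$) theorem rather than this one: you match against $P_{\alpha+1}^{-\beta}$ with constant $2^{\beta/2}\Gamma(\beta+1)/(t(1+x))^{\beta/2}$ and argument $(1+t)/\sqrt{1+t^2-2tx}$, whereas the present statement asserts $P_{\beta+1}^{-\alpha}$ with constant $2^{\alpha/2}\Gamma(\alpha+1)/(t(x-1))^{\alpha/2}$ and argument $(1-t)/\sqrt{1+t^2-2tx}$. Since the denominator parameter of the ${}_2F_1$ you derived is $\alpha+1=\mu+1$, the order is forced to be $-\alpha$ and the degree to be $\beta+1$; the quadratic transformation together with \cite[(14.3.6)]{NIST:DLMF} is the right tool, but the identification of degree, order, prefactor and argument must be redone with these values.
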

}

{Starting from
\eqref{gfRog2}, using the
symmetric limit $\alpha=\beta$, we can produce 
a new generating function for continuous $q$-ultraspherical polynomials.
}
{
\begin{cor}
Let $q\in\CCdag$, $x=\frac12(z+z^{-1})\in\CC$, $\beta,t\in\CCast$ such that 
$|t|<1$. Then
\begin{eqnarray}
&&\hspace{-0.5cm}\sum_{n=0}^\infty\frac{(\pm q\beta;q)_n}{(\pm\beta;q)_n}t^n C_n(x;\beta|q)=
\frac{(q\beta,-q\beta tz^\pm,\beta t^2;q)_\infty}{(-\beta,t z^\pm,-q\beta t^2;q)_\infty}
\Whyp87{-\beta t^2}{-\frac{1}{q},\pm q\sqrt{\beta}t,tz^\pm}{q,q\beta}.
\end{eqnarray}
\end{cor}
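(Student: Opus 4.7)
The plan is to mimic the proof of Corollary~\ref{firstRogersgf} by specialising the generating function \eqref{gfRog2} to $\alpha=\beta$ and converting the continuous $q$-Jacobi polynomials into Rogers polynomials via \eqref{qJactoqGeg}; the resulting pair of balanced ${}_4\phi_3$ series on the right-hand side is then combined into the single very-well-poised ${}_8W_7$ of the claim by reading Bailey's nonterminating transformation of an ${}_8W_7$ into a sum of two balanced ${}_4\phi_3$'s (see e.g.~\cite[Sec.~2.11]{GaspRah}) in reverse.

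First I will set $\alpha=\beta$ in \eqref{gfRog2} and replace $y$ by $x$. On the left-hand side, the identity
\[
P_n^{(\beta,\beta)}(x|q)=\frac{(q^{\beta+1};q)_n}{(q^{2\beta+1};q)_n}\,q^{(\frac{\beta}{2}+\frac14)n}\,C_n(x;q^{\beta+\frac12}|q)
\]
from \eqref{qJactoqGeg} causes the ratio $(q^{2\beta+1};q)_n/(q^{\beta+1};q)_n$ in the Poisson coefficient to cancel, while the factor $q^{(\frac{\beta}{2}+\frac14)n}$ is absorbed into a rescaled series variable. After the renamings $q^{\beta+\frac12}\mapsto\beta$ (a new Rogers parameter) and $q^{\frac{\beta}{2}+\frac14}t\mapsto t$, the left-hand side takes exactly the form of the series claimed in the corollary.

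On the right-hand side of \eqref{gfRog2} with $\alpha=\beta$, the numerator parameter $q^{\frac12(\alpha+\beta+2)}=q^{\beta+1}$ of the first ${}_5\phi_4$ coincides with the denominator parameter $q^{\alpha+1}=q^{\beta+1}$, and similarly with the extra factor of $t$ inside the second ${}_5\phi_4$; both series therefore collapse to balanced ${}_4\phi_3$'s. With the renamings above, the resulting weighted sum of two ${}_4\phi_3$'s with infinite-product prefactors will be precisely the right-hand side of Bailey's nonterminating ${}_8W_7$-to-pair-of-${}_4\phi_3$ transformation at the parameters
\[
a=-\beta t^2,\quad b=-\tfrac{1}{q},\quad c=q\sqrt{\beta}\,t,\quad d=-q\sqrt{\beta}\,t,\quad e=tz,\quad f=tz^{-1},
\]
which satisfy the balance condition $a^2q^2/(bcdef)=q\beta$ needed to identify the argument of the resulting ${}_8W_7$ with the one in the claim.

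\textbf{The main obstacle} will be the bookkeeping of infinite products: one has to verify that, after the $\alpha=\beta$ reduction and the renamings above, the two $(\,\cdot\,;q)_\infty$-ratios coming out of \eqref{gfRog2} match exactly the two prefactors of Bailey's transformation at the parameters listed. Since the balance condition pins down all five Bailey parameters uniquely, this is a direct and essentially deterministic manipulation of $q$-shifted factorials, and no new idea beyond \eqref{gfRog2}, \eqref{qJactoqGeg}, and the Bailey transformation is required.
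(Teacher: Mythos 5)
Your proposal is correct and follows essentially the same route as the paper: set $\alpha=\beta$ in \eqref{gfRog2} so that the two ${}_5\phi_4$'s collapse to balanced ${}_4\phi_3$'s, recombine them into the ${}_8W_7$ via Bailey's transformation \cite[(17.9.16)]{NIST:DLMF}, and convert the left-hand side with \eqref{qJactoqGeg}. Your explicit identification of the Bailey parameters (which indeed gives argument $a^2q^2/(bcdef)=q\beta$) is a useful check the paper leaves implicit, but it is the same argument.
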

}
{
\begin{proof}
Starting with \eqref{gfRog2} with $\alpha=\beta$ causes the sum of two ${}_5\phi_4(q,q)$s to become a sum of two ${}_4\phi_3(q,q)$s which then naturally transforms to an ${}_8W_7$ using
Bailey's transformation \cite[(17.9.16)]{NIST:DLMF}.
Then using \eqref{qJactoqGeg}
completes the proof.
\end{proof}
}

\subsubsection{{Arbitrary argument transformations that arise from Gasper \& Rahman's Poisson kernel for continuous $q$-Jacobi polynomials}}


{As mentioned just below \eqref{bigthm}, if one makes the double replacement of $z$ and $w$ in the Poisson kernel, one obtains an arbitrary argument transformation formula. Because of limitations on space in the current manuscript, we will only treat a single example. Note however, that if one considers the Poisson kernel for continuous $q$-Jacobi
polynomials and then one takes the special values \eqref{spec1or}--\eqref{spec4} 
for $z$ and $w$ simultaneously, then there are $4\times 4=16$ combinations. 
For each case, the Poisson kernel reduces to a single nonterminating basic 
hypergeometric series with an arbitrary argument. The
cases $z=w$ correspond to 4 unique transformations and the
12 off-diagonal combinations combine into pairs, with 
6 paired transformations for a single nonterminating basic hypergeometric series. 
Below we present the results for the computation of these transformations.
For the $z=w=a$ case, then the Poisson kernel
produces a well-poised ${}_3\phi_2$ with arbitrary
argument $z$. 
This is a well-poised nonterminating ${}_3\phi_2$ with
arbitrary argument $z$ expressed as a sum of two
${}_4\phi_3$s with argument $q$ or a
nonterminating ${}_8W_7$.
}
{
\begin{thm} Let $a,b,c\in\CC$ such that
$|a|,|z|<1$, $q\in\CCdag$. Then the 
following well-poised ${}_3\phi_2$ is
given by
\begin{eqnarray}
&&\hspace{-0.2cm}\qhyp32{ab,\sqrt{q}a,q\sqrt{ab}}{\sqrt{q}b,\sqrt{ab}}{q,z}
=\frac{(a^2z^2,-\sqrt{q^3a^3b}z
;q)_\infty}
{(qa^2z^2,-\sqrt{a/(qb)}z
;q)_\infty}
\qhyp43{q\sqrt{ab}
,\pm\sqrt{qab},-\sqrt{\frac{qb}{a}}
}{
\sqrt{q}b,
-\sqrt{q^3a^3b}z,-\sqrt{\frac{q^3b}{a}}z^{-1}
}{q,q}\nonumber\\
&&\hspace{2.0cm}+\frac{(qab,\sqrt{q}az,-\sqrt{qb/a},
-\sqrt{ab}z
;q)_\infty}
{(\sqrt{q}b,-q\sqrt{ab},z,-\sqrt{qb/a}z^{-1}
;q)_\infty}
\qhyp43{z,\pm az,-\sqrt{q}az}
{qa^2z^2,-\sqrt{\frac{qa}{b}}z,
-\sqrt{ab}z
}{q,q}\\
&&\hspace{0.5cm}=\frac{(a^2z^2,q\sqrt{ab}z,-\sqrt{qab}z,q\sqrt{a^3b}z,-\sqrt{q^3a^3b}z;q)_\infty}
{(qa^2z^2,z,az,-\sqrt{q}az,-\sqrt{q^3}abz;q)_\infty}\nonumber\\
&&\hspace{2cm}\times\Whyp{8}7{-\sqrt{q}abz}{q\sqrt{ab},-\sqrt{qab},\sqrt{\frac{b}{a}},-\sqrt{\frac{qb}{a}},-qaz}{q,-az}.
\end{eqnarray}
\end{thm}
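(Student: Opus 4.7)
The plan is to obtain this identity by specializing the Poisson kernel \eqref{bigthm} for continuous $q$-Jacobi polynomials at $x=y=\tfrac12(q^{\frac12\alpha+\frac14}+q^{-\frac12\alpha-\frac14})$, i.e.\ at $z=w=q^{\frac12\alpha+\frac14}$ (the first of the four special values in Corollary \ref{spec}), and afterwards relabelling $a=q^{\alpha+\frac12}$, $b=q^{\beta+\frac12}$, $z=tq^{-\alpha-\frac12}$. On the left-hand side \eqref{PctsqJab}, the special value \eqref{spec1or} lets each $P_n^{(\alpha,\beta)}$ factor collapse to $(q^{\alpha+1};q)_n/(q;q)_n$, and the surrounding ratio of Pochhammer symbols reduces the whole kernel to a well-poised ${}_3\phi_2$ with argument $tq^{-\alpha-\frac12}$; the relabelling above identifies this with the claimed $\qhyp32{ab,\sqrt{q}a,q\sqrt{ab}}{\sqrt{q}b,\sqrt{ab}}{q,z}$.

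On the right of \eqref{bigthm} the three pieces are handled separately. In the first term, the inner ${}_{10}W_9$ carries $q^{\frac12\alpha+\frac14}z^{\pm}$ and $q^{\frac12\alpha+\frac14}w^{\pm}$ among its numerator parameters; under the specialization both $q^{\frac12\alpha+\frac14}z^{-1}$ and $q^{\frac12\alpha+\frac14}w^{-1}$ equal $1$, so $(1;q)_k=0$ for $k\ge 1$ forces the ${}_{10}W_9$ to collapse to its $k=0$ summand, which is $1$. The remaining outer series then reduces, after a routine cancellation of coinciding Pochhammers, to a ${}_4\phi_3(q,q)$ whose parameters and infinite-product prefactor coincide exactly with the first summand on the right of the claim. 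The second term vanishes outright: its prefactor contains $(q^{\frac12\alpha+\frac14}w^{\pm};q)_\infty$, and one factor of this is $(1;q)_\infty=0$; a brief check shows no divergence in the outer series or the ${}_{10}W_9$ compensates this. The third term is treated exactly as the first: $q^{\frac12\alpha+\frac14}w^{-1}=1$ collapses its ${}_{10}W_9$ to $1$, and analogous cancellations leave precisely the second ${}_4\phi_3(q,q)$ and prefactor of the claim.

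The equivalent single-${}_8W_7$ representation then follows from Bailey's nonterminating transformation \cite[(17.9.16)]{NIST:DLMF}, which expresses a very-well-poised nonterminating ${}_8W_7$ as a sum of two balanced ${}_4\phi_3(q,q)$s of exactly the shape produced above; a parameter match yields the stated ${}_8W_7$ with leading entry $-\sqrt{q}\,abz$ and argument $-az$.

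The main obstacle is bookkeeping: matching the many infinite-product cancellations in the three prefactors of \eqref{bigthm} with the claimed prefactors, verifying that the vanishing of the second term is a genuine zero rather than a $0\cdot\infty$ indeterminacy, and lining up the five free parameters of the ${}_8W_7$ in Bailey's transformation. The essential simplification driving the whole argument is the $(1;q)_n$-mechanism collapse of both surviving ${}_{10}W_9$s to unity, which is what converts a formidable triple-sum identity into a comparatively tractable sum of two ${}_4\phi_3(q,q)$s.
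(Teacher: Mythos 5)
Your proposal follows the paper's own proof exactly: substitute $z=w=a=q^{\frac12\alpha+\frac14}$ into the Poisson kernel \eqref{bigthm}, use the special value \eqref{spec1or} to collapse the left-hand side \eqref{PctsqJab} to a well-poised ${}_3\phi_2$, note that the middle term on the right vanishes while the first and third collapse to the two ${}_4\phi_3(q,q)$s, relabel $q^{\alpha+\frac12}\mapsto a$, $q^{\beta+\frac12}\mapsto b$, and finish with Bailey's transformation \cite[(17.9.16)]{NIST:DLMF}. The only difference is that you spell out the $(1;q)_k$-collapse of the surviving ${}_{10}W_9$s and the genuine vanishing of the second term --- details the paper's two-line proof leaves implicit.
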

}
{\begin{proof}
Start with the Poisson kernel for continuous
$q$-Jacobi polynomials and substitute
$z=w=a$. This converts the infinite series
into a single nonterminating ${}_3\phi_2$.
If one takes $q^{\alpha+\frac12}\mapsto a$, $q^{\beta+\frac12}\mapsto b$, then one arrives at a nonterminating 
transformation which is a sum of two
${}_4\phi_3$s with argument $q$. Using
Bailey's transformation \cite[(17.9.16)]{NIST:DLMF} we convert
it to a nonterminating ${}_8W_7$.
This completes the proof.
\end{proof}
}

\subsection*{Acknowledgements}
We would like to thank Tom Koornwinder, Mourad E.~H.~Ismail and George Gasper for valuable discussions.


\def\cprime{$'$} \def\dbar{\leavevmode\hbox to 0pt{\hskip.2ex \accent"16\hss}d}

\end{document}